\date{}
\renewcommand{\uppercasenonmath}[1]{}
\numberwithin{equation}{section} \theoremstyle{plain}
\newtheorem{lem}{Lemma}[section]
\newtheorem{cor}[lem]{Corollary}
\newtheorem{prop}[lem]{Proposition}
\newtheorem{thm}[lem]{Theorem}
\newtheorem{Defn}[lem]{Definition}
\newtheorem{Ex}[lem]{Example}
\newtheorem{Quest}[lem]{Question}
\newtheorem{Property}[lem]{Property}
\newtheorem{Properties}[lem]{Properties}
\newtheorem{Subprops}{}[lem]
\newtheorem{Para}[lem]{}
\newtheorem{fact}[lem]{Fact}
\newtheorem{rem}[lem]{Remark}
\newenvironment{df}{\begin{Defn}\rm}{\end{Defn}}
\newenvironment{para}{\begin{Para}\rm}{\end{Para}}
\newtheorem*{ack*}{ACKNOWLEDGEMENTS}
\newcommand{\pf}{\noindent\begin {proof}}
\newcommand{\epf}{\end{proof}}
\newcommand{\C}{\mathbb{C}}
\newcommand{\Ab}{\mathcal{A}}
\newcommand{\AbT}{\widetilde{\mathcal{A}}}
\newcommand{\BBT}{\widetilde{\mathcal{B}}}
\newcommand{\GA}{\mathcal{G}(\mathcal{A})}
\newcommand{\s}{\stackrel}
\newcommand{\A}{\mathcal{A}}
\newcommand{\AT}{\widetilde{\mathcal{A}}}
\newcommand{\GAT}{\mathcal{G}(\mathcal{A})}
\newcommand{\BT}{\widetilde{\mathcal{B}}}
\newcommand{\AB}{\mathcal{A}\cap{\mathcal{B}}}
\newcommand{\Mod}{\mbox{\rm Mod}}
\newcommand{\B}{\mathcal{B}}
\newcommand{\Ext}{\mbox{\rm Ext}_R}
\newcommand{\oext}{\mbox{\rm Ext}}
\newcommand{\Hom}{\mbox{\rm Hom}_R}
\newcommand{\h}{\mbox{\rm H}}
\newcommand{\Yxt}{\overline{{\rm Ext}}}
\newcommand{\lHom}{\widetilde{{\rm Hom}}_{R}}
\newcommand{\uHom}{\overline{{\rm Hom}}_{R}}
\newcommand{\dHom}{\overline{{\rm Hom}}_{R}}
\newcommand{\wHom}{\widetilde{{\rm Hom}}_{R}}
\newcommand{\R}{\mathbf{R}}
\newcommand{\wtE}{\widetilde{{\rm Ext}}}
\newcommand{\wte}{\widetilde{{\rm Ext}}}
\newcommand{\AAB}{\mathbf{A}}
\newcommand{\ra}{\rightarrow}
\newcommand{\Ra}{\Rightarrow}
\newcommand{\im}{\mbox{\rm im}}
\newcommand{\coker}{\mbox{\rm coker}}
\newcommand{\Con}{\mbox{\rm Con}}
\begin{document}
\begin{center}
{\large  \bf  Tate-Vogel and relative cohomologies of complexes\\ with respect to cotorsion pairs}

\vspace{0.5cm}  Jiangsheng Hu$^{a}$, Huanhuan Li$^{b}$, Jiaqun Wei$^{c}$, Xiaoyan Yang$^{d}$ and Nanqing Ding$^{e}$  \\
$^{a}$Department of Mathematics, Jiangsu University of Technology,
Changzhou 213001, China\\

$^{b}$School of Mathematics and Statistics, Xidian University, Xi'an 710071, China\\

$^{c}$School of Mathematics Sciences, Nanjing Normal University, Nanjing 210046, China\\

$^{d}$Department of Mathematics, Northwest Normal University, Lanzhou 730070,
China\\
$^{e}$Department of Mathematics, Nanjing University, Nanjing 210093,
China
E-mails:jiangshenghu@jsut.edu.cn, lihuanhuan0416@163.com, weijiaqun@njnu.edu.cn, yangxy@nwnu.edu.cn and nqding@nju.edu.cn
\end{center}

\bigskip
\centerline { \bf  Abstract}
\leftskip10truemm \rightskip10truemm \noindent
We study Tate-Vogel and relative cohomologies of complexes by applying the model structure induced by a complete hereditary cotorsion pair ($\A$, $\B$) of modules. We show first that the class of complexes admitting a complete $\A$ resolution is exactly the class of complexes with finite Gorenstein $\A$ dimension. This lets us give general techniques for computing Tate-Vogel cohomoloies
of complexes with finite Gorenstein $\A$ dimension. As a consequence,
relative cohomology groups for complexes with finite Gorenstein $\A$ dimension are investigated. Finally, the relationships between  Gorenstein $\A$ dimensions and $\A$ dimensions for complexes are given. \\
\vbox to 0.3cm{}\\
{\it Key Words:} cotorsion pair; Gorenstein dimension; Tate-Vogel cohomology; relative cohomology.\\
{\it 2010 Mathematics Subject Classification:} 16E05; 18G20; 18G35.

\leftskip0truemm \rightskip0truemm
\bigskip
\section{ \bf Introduction}
Avramov and Foxby \cite{AFHd} explored projective and injective dimensions arising from constructions of $dg$-projective and $dg$-injective resolutions of complexes. Using the notions of $dg$-projective and $dg$-injective resolutions, one can define Gorenstein projective and Gorenstein injective dimensions for complexes (see \cite{IAG,vg}).
Cotorsion pairs were invented by Salce \cite{SL} in the category of
abelian groups, and rediscovered by Enochs and coauthors
\cite{BEE,EEEI,EJ0,EJ,EJT} in the 1900's.
Let $(\Ab,\B)$ be a complete hereditary cotorsion pair of modules.
A general construction of  $dg\AbT$ (resp., $dg\BBT$) resolutions of  any complex was provided in \cite[Corollary 3.8]{YL}. Furthermore, Yang and Ding \cite{YD} made a general study of $\mathcal{A}$ (resp., $\mathcal{B}$) dimensions of complexes. Recently, Yang and Chen \cite{YC} investigated Gorenstein $\A$ dimensions of complexes, which describe how Gorenstein dimensions of complexes should work for any complete hereditary cotorsion pair.

Tate cohomology was created in the 1950s, based on Tate's observation that the $\mathbb{Z}$G-module $\mathbb{Z}$ with the trivial action admits a complete projective resolution \cite{CE}. It was further extended by Avramov and Martsinkovsky  \cite{AM} to finitely generated modules of finite Gorenstein dimension over Noetherian rings and by Veliche \cite{vg} to
complexes of finite Gorenstein projective dimension.  On the other hand, as an attempt to generalize the theory to arbitrary groups, Tate-Vogel cohomology (also known as stable cohomology in \cite{AVV} to emphasize a relation to stabilization of module categories) was developed independently by Mislin \cite{Mislin}, Benson and
Carlson \cite{BF} and Vogel (first publish account in  \cite{Goichot}) and extended to complexes by Asadollahai and Salarian \cite{ASCT}. Recently, Hu and Ding \cite{HuDing} investigated Tate-Vogel cohomology of complexes by applying the model structure induced
by a complete hereditary cotorsion pair $(\Ab,\B)$ of modules. Detailed definition can be seen in Fact \ref{fact:2.5} below.

An interesting and deep result in \cite{ASCT} is that Tate-Vogel cohomology for complexes defined by  Asadollahai and Salarian is compatible with Tate cohomology for complexes defined by Veliche in \cite{vg}. More specifically, if $M$ is a complex of finite Gorenstein projective dimension and $N$ is a bounded below complex, then for each integer $i$ one has $$\wtE_R^i(M,N)\cong \h_{-i}(\Hom(T,N)),$$
where $\wtE_R^i(M,N)$ is a Tate-Vogel cohomology and $T\ra P\ra M$ is a complete resolution of $M$.

The main object of this paper is to study how Tate-Vogel cohomologies should work for any complex with finite Gorenstein $\A$ dimension whenever $(\Ab,\B)$ is a complete hereditary cotorsion pair of modules.

To state our main result more precisely, let us first introduce some definitions.

By $\Mod R$ (resp., $\mathbb{C}(R)$) we mean the category of all left $R$-modules (resp., all complexes of left $R$-modules). We always assume that $\AAB$ = ($\A$, $\B$) is a complete hereditary cotorsion pair in $\Mod R$.

Following \cite{CEG}, we denote by ${\C}(R)_{\mathcal{M}^{\B}_{\A}}$ the category $\mathbb{C}(R)$ endowed with the model structure induced by the complete hereditary cotorsion pair $(\Ab,\B)$ in $\Mod R$. Recall from \cite[Definition 3.1]{HuDing}, a \emph{cofibrant-fibrant} resolution of a complex $M$ is a diagram $\xymatrix@C=0.8cm{\mathcal{QR}M \ar[r]^{p_{\mathcal{R}M}} & \mathcal{R}M & M \ar[l]_{i_{M}}}$ of morphisms of complexes with $p_{\mathcal{R}M}$ a cofibrant replacement in ${\C}(R)_{\mathcal{M}^{\B}_{\A}}$ and $i_{M}$ a fibrant replacement in ${\C}(R)_{\mathcal{M}^{\B}_{\A}}$.

Let $M$ and $N$ be complexes of $R$-modules. Then we have the following exact sequence of complexes:
$$0\ra\uHom(\mathcal{QR}M,\mathcal{QR}N)\ra \Hom(\mathcal{QR}M,\mathcal{QR}N) \ra \wHom(\mathcal{QR}M,\mathcal{QR}N)\ra 0.$$
By \cite[Definition 3.7]{HuDing}, the $n$th \emph{Tate-Vogel cohomology group}, denoted by $\wtE_{\AAB}^n(M,N)$, is defined as
$$\wtE_{\AAB}^n(M,N)=\textrm{H}_{-n}(\wHom(\mathcal{QR}M,\mathcal{QR}N)).$$
Thus we have a long exact sequence $$\xymatrix@C=0.93cm{
  \cdots \ar[r]& \Yxt_{\AAB}^n(M,N) \ar[r] & \Ext^n(M,N) \ar[r]^{\widetilde{\varepsilon}_R^n(M,N)} & \wtE_{\AAB}^n(M,N)\ar[r] &\Yxt_{\AAB}^{n+1}(M,N) \ar[r]&\cdots,}$$
where $\Yxt_{\AAB}^n(M,N)=\h_{-n}(\uHom(\mathcal{QR}M,\mathcal{QR}N))$ for each integer $n$ (see Fact \ref{fact:2.5} and Remark \ref{rem:4.1} below).

Let $M$ be a complex. By \cite[Definition 4.1]{YC}, one has the notion of  Gorenstein $\A$ dimension of $M$. Detailed definition can be seen in Definition \ref{df:3.8'} below. It is proved that $M$ has finite Gorenstein $\A$ dimension if and only if  for each cofibrant-fibrant resolution $\mathcal{QR}M\ra \mathcal{R}M\leftarrow M$ of $M$, there exists a complete $\A$ resolution $T\s{\tau} \ra \mathcal{QR}M\ra \mathcal{R}M\leftarrow M$ of $M$ with each $\tau_{i}$ a split epimorphism such that $\tau_{i}$ $=$ {\rm id$_{T_{i}}$} for $i\gg0$ (see Definition \ref{df:3.8} and Theorem \ref{thm:3.9} below).

Now, our first main result can be stated as follows.

\begin{thm}\label{theorem:3.6} Let $R$ be a ring and $M$ a complex of finite Gorenstein $\A$ dimension. For each complex $N$ and each integer $n$, there exist isomorphisms
$$\wtE_{\AAB}^n(M,N)\cong \h_{-n}(\Hom(T,\mathcal{QR}N)) \ \textrm{and} \ \Yxt_{\AAB}^{n+1}(M,N)\cong\h_{-n}(\Hom(L_M,\mathcal{QR}N)),$$
 where $\mathcal{QR}N\ra \mathcal{R}N\leftarrow N$ is a cofibrant-fibrant resolution of $N$ and $T\s{\tau}\ra\mathcal{QR}M\ra \mathcal{R}M\leftarrow M$ is a split complete $\A$ resolution of $M$ with $L_M =\ker(\tau)$ such that each $\tau_{i}$ is a split epimorphism and $\tau_{i}$ $=$ {\rm id}$_{T_{i}}$ for all $i\gg0$. In the case when $N$ is a bounded below $dg$-$\B$ complex, we have  $$\wtE_{\AAB}^n(M,N)\cong \h_{-n}(\Hom(T,N)) \ \textrm{and} \ \Yxt_{\AAB}^{n+1}(M,N)\cong\h_{-n}(\Hom(L_M,N)).$$
\end{thm}

The relative cohomology theory was initiated by Butler and Horrocks \cite{BH} and Eilenberg and Moore \cite{EM} and has been revitalized recently by a number of authors (see, for example \cite{AM,EJ0,EJ,Hgd,sswG}), notably, Avramov and Martsinkovsky \cite{AM} and Enochs and Jenda \cite{EJ}. Based on the notions of proper $\mathcal{X}$-resolutions, one can define the relative cohomology functors $\textrm{Ext}_{\mathcal{X}}^{n}(-,-)$. Detailed definitions can be found in Definition \ref{df:2.0} below. As an application of Theorem \ref{theorem:3.6}, we have the following result. See \ref{4.10} for the proof.

\begin{thm}\label{thm:1.4'} Let  $M$ be an $R$-module with finite Gorenstein $\A$ dimension. Denote by $\GA$ the class of Gorenstein $\A$-modules. For any $R$-module $N$ in $\B$, we have the following isomorphisms:
\begin{enumerate}
\item {\rm $\oext^{1}_{\GA}(M,N)\cong\ker(\widetilde{\varepsilon}_R^1(M,N))$;
\item {\rm $\oext^{n}_{\GA}(M,N)\cong\Yxt_{\AAB}^{n}(M,N)$}} for any integer $n>1$.
\end{enumerate}
\end{thm}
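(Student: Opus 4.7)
My plan is to combine the characterizations of finite Gorenstein $\B$-dimension from Theorem~\ref{thm:3.9} with the short exact sequence $0\to\dHom\to\Hom\to\lHom\to 0$ of Hom complexes defining $\wte_{\AAB}^{n}$, reducing both $\oext^{n}_{\GB}(M,N)$ and $\Yxt_{\AAB}^{n}(M,N)$ to computations on a finite proper $\GB$-coresolution of $N$ and then matching them up by induction on $\GB$-$\dim N$.

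Concretely, set $n_{0}=\GB$-$\dim N<\infty$. By Theorem~\ref{thm:3.9}(4) there is a quasi-isomorphism $N\to B$ with $B$ a $dg$-$\B$ complex satisfying $\textrm{Z}_{-n_{0}}(B)\in\GB$. Truncating at position $-n_{0}$ produces an exact sequence
\[0\to N\to B^{0}\to B^{1}\to\cdots\to B^{n_{0}-1}\to G\to 0,\]
with each $B^{i}\in\B\subseteq\GB$ and $G=\textrm{Z}_{-n_{0}}(B)\in\GB$. Using that $(\A,\B)$ is hereditary and that $\GB$ is closed under extensions and kernels of $\GB$-epimorphisms in the standard Gorenstein sense, I would verify that this is a proper $\GB$-coresolution of $N$, so $\oext^{n}_{\GB}(M,N)$ is computable as the $n$th cohomology of $\Hom(M,-)$ applied to it.

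Next I would produce a natural comparison morphism $\oext^{n}_{\GB}(M,N)\to\Yxt^{n}_{\AAB}(M,N)$ by lifting the proper coresolution to a morphism of fibrant--cofibrant resolutions and projecting onto the $\dHom$ factor of the defining short exact sequence. The proof that this map is an isomorphism for $n\geqslant 2$ proceeds by induction on $n_{0}$. In the base case $N=G\in\GB$, one has $\oext^{n}_{\GB}(M,G)=0$ for $n\geqslant 1$ via the trivial coresolution. On the other side, combining the Tate $\B$-resolution from Theorem~\ref{thm:3.9}(7) with the cotorsion-pair vanishing $\Ext^{i}(M,B)=0$ for $B\in\B$ and $i\geqslant 1$, one shows that $\Ext^{n}(M,G)\to\wte^{n}_{\AAB}(M,G)$ is an isomorphism for $n\geqslant 1$, forcing $\Yxt^{n}_{\AAB}(M,G)=0$ for $n\geqslant 2$ via the long exact sequence. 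The inductive step applies the Five Lemma to the long exact sequences of $\oext^{\bullet}_{\GB}(M,-)$ and $\Yxt_{\AAB}^{\bullet}(M,-)$ induced by $0\to N\to B^{0}\to N'\to 0$, where $\GB$-$\dim N'\leqslant n_{0}-1$.

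For the $n=1$ case, the comparison map composed with $\Yxt_{\AAB}^{1}(M,N)\to\Ext^{1}(M,N)$ recovers the canonical map $\oext^{1}_{\GB}(M,N)\to\Ext^{1}(M,N)$, and this factors through $\ker(\widetilde{\varepsilon}^{1}_{R}(M,N))$ by construction. Injectivity follows from a dimension shift along $0\to N\to B^{0}\to N'\to 0$ together with the $n\geqslant 2$ isomorphism just established, and surjectivity onto $\ker(\widetilde{\varepsilon}^{1}_{R}(M,N))$ follows from a direct lifting argument inside the model structure. The main obstacle I anticipate is the base-case identification $\Ext^{n}(M,G)\cong\wte^{n}_{\AAB}(M,G)$ for $G\in\GB$ and $M\in\A$ (equivalently, $\Yxt^{n}_{\AAB}(M,G)=0$ for $n\geqslant 1$), which requires careful manipulation of the Tate $\B$-resolution of $G$; once that is established, the rest reduces to routine diagram chasing with long exact sequences.
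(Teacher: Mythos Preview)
Your overall strategy---build a proper $\GB$-coresolution of $N$, compare it to the Tate machinery, and read off the isomorphisms---is the right one, but the specific construction you propose has a gap that the paper's argument is designed to avoid.

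The problem is properness. Truncating a $dg$-$\B$ resolution $N\to B$ at $-n_{0}$ gives an exact sequence with intermediate terms in $\B$ and last term $G=\textrm{Z}_{-n_{0}}(B)\in\GB$, but you have no control over the intermediate cokernels $C_{i}=\textrm{Z}_{-i}(B)$ for $0<i<n_{0}$. For the augmented sequence to be $\Hom(-,\GB)$-exact you would need $\Ext^{1}(C_{i},G')=0$ for every $G'\in\GB$, and there is no reason this holds: neither $\B$ nor $\A$ is contained in ${}^{\perp_{1}}\GB$ in general (already for $(\A,\B)=(\Mod R,\mathcal{I})$ this would force every Gorenstein injective to be injective). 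The same issue undermines your inductive step: the short exact sequence $0\to N\to B^{0}\to N'\to 0$ with $B^{0}\in\B$ need not be $\Hom(-,\GB)$-exact, so $\oext^{\bullet}_{\GB}(M,-)$ need not produce a long exact sequence on it, and the Five Lemma cannot be applied.

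The paper gets around this in two moves. First, it reduces to the case $N\in\A$ by taking a special $\A$-precover $0\to B\to A\to N\to 0$ and showing (Lemmas~\ref{lem:5.1''} and \ref{lem:5.1'''}) that all three quantities $\oext^{n}_{\GB}(M,-)$, $\Ext^{n}(M,-)$, $\wte^{n}_{\AAB}(M,-)$ transfer isomorphically from $A$ to $N$ for $n\geqslant 1$. Second, for $A\in\A$ it does not truncate an arbitrary $dg$-$\B$ complex but instead uses the split Tate $\B$ resolution $A\to\mathcal{R}A\stackrel{\tau}{\to}T$ from Theorem~\ref{thm:3.9}: Lemma~\ref{prop:5.2} shows that the degreewise-split short exact sequence $0\to\mathcal{R}A\to\widetilde{T}\to\Sigma X\to 0$ has $X$ a \emph{proper} $\GB$-coresolution of $A$, precisely because the nonzero terms of $\coker(\tau)$ lie in $\A\cap\B$, and $\A\cap\B\subseteq{}^{\perp}\GB$ by definition of $\GB$. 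With that in hand there is no induction: applying $\Hom(M,-)$ and invoking Theorem~\ref{theorem:3.6} directly identifies $\Yxt^{n+1}_{\AAB}(M,A)\cong\h_{-n}(\Hom(M,\Sigma X))=\oext^{n+1}_{\GB}(M,A)$ for $n\geqslant 1$, and the $n=1$ case drops out of the same long exact sequence. So the key lemma you are missing is not a base-case vanishing but the explicit identification $L_{A}=\coker(\tau)\cong\Sigma X$ with $X$ proper, which in turn rests on having entries in $\A\cap\B$ rather than merely in $\B$.
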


We note that Theorem \ref{thm:1.4'} is motivated by \cite[Remark 6.7]{vg}, where the author pointed out
that there are some ``obstacles" to define relative cohomology groups for complexes. We refer to \cite{IAG,Liang,Liu2018} for a more discussion on this matter.
Theorem \ref{thm:1.4'}(2) shows that $\Yxt_{\AAB}^{n}(-,-)$ defined here extends the relative cohomology for modules with finite Gorenstein $\A$ dimension defined in Definition \ref{df:2.0} whenever $n$ is an integer with $n>1$.

%

By \cite[Definition 3.1]{YD}, the $\mathcal{A}$ dimension of a complex $M$, denoted by $\mathcal{A}$-dim$M$, is defined as $\mathcal{A}$-dim$M$ =
$\inf$\{sup\{$i$ $|$ $A_{-i}\neq0$\} $|$ $M\simeq A$ with $A\in{dg\widetilde{\mathcal{A}}}$\}. Note that each module in $\A$ is a Gorenstein $A$-module. It seems natural to investigate the relationships between  Gorenstein $\A$ dimensions and $\A$ dimensions for complexes. Motivated by this, we have the following theorem, which is a consequence of Theorems \ref{theorem:3.6} and \ref{thm:1.4'}. See \ref{proofof1.4} for the proof.

\begin{thm}\label{thm:4.4}Let {\rm ($\A$, $\B$)} be a complete hereditary cotorsion pair in $\Mod R$ and $M$ a complex.
\begin{enumerate}
\item There is an inequality
{\rm\begin{center}{$\GAT\textrm{-dim}M\leqslant \mathcal{A}\textrm{-dim}M,$}
\end{center}}
\noindent and the equality holds if {\rm$\mathcal{A}\textrm{-dim}M<\infty$}.

\item If {\rm $\GAT$-dim$M$ $<\infty$}, then the following are equivalent:
\begin{enumerate}
\item {\rm $\mathcal{A}\textrm{-dim}M$ = $\GAT$-dim$M$};
\item {\rm $\widetilde{\textrm{Ext}}_{\AAB}^i(M,X)=0$} for all $i\in{\mathbb{Z}}$ and all $dg$-$\B$ complexes $X$;
\item {\rm $\widetilde{\textrm{Ext}}_{\AAB}^i(M,N)=0$} for some $i\in{\mathbb{Z}}$ and all $R$-modules $N\in{\B}$;
\item For each cofibrant-fibrant resolution $\mathcal{QR}M\ra \mathcal{R}M\leftarrow M$ of $M$, there is a large enough $n$ such that
{\rm $\varepsilon^{i}(\textrm{C}_{n}(\mathcal{QR}M),N):\textrm{Ext}^{i}_{\GA}(\textrm{C}_{n}(\mathcal{QR}M),N)\ra \textrm{Ext}^{i}_{R}(\textrm{C}_{n}(\mathcal{QR}M),N)$} is an isomorphism for all $i\in{\mathbb{Z}}$ and any $R$-module $N$ in $\B$.
\end{enumerate}
\end{enumerate}
\end{thm}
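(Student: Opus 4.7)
For \textbf{(1)}, the inequality $\mathcal{GB}\textrm{-dim}\,N\leqslant\mathcal{B}\textrm{-dim}\,N$ is immediate from Theorem \ref{thm:3.9}(4): if $\mathcal{B}\textrm{-dim}\,N=n<\infty$, take a dg-$\BT$ resolution $N\simeq B$ with $B_{i}=0$ for $i<-n$; then $\textrm{Z}_{-n}(B)=B_{-n}\in\B\subseteq\GB$, so Theorem \ref{thm:3.9}(4) delivers $\mathcal{GB}\textrm{-dim}\,N\leqslant n$. For the reverse inequality when $\mathcal{B}\textrm{-dim}\,N<\infty$, I would isolate the following key lemma: \emph{any Gorenstein $\B$-module $G$ with $\mathcal{B}\textrm{-dim}\,G<\infty$ already lies in $\B$}. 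Its proof is a standard splicing argument: the totally acyclic complex in $\B$ witnessing $G\in\GB$ combined with finite $\B$-dimension forces $\Ext^{1}(A,G)=0$ for every $A\in\A$, and the cotorsion pair $(\A,\B)$ then puts $G$ in $\B$. Given the lemma, suppose $\mathcal{B}\textrm{-dim}\,N=n$ and $\mathcal{GB}\textrm{-dim}\,N\leqslant n-1$; take a dg-$\BT$ resolution $N\simeq B$ realizing length $n$. The syzygy $\textrm{Z}_{-(n-1)}(B)$ has $\B$-dimension at most $1$ and is Gorenstein $\B$ by Theorem \ref{thm:3.9}(5), so the lemma puts it in $\B$, truncates $B$ to length $n-1$, and contradicts minimality.

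For \textbf{(2)}, assume $\mathcal{GB}\textrm{-dim}\,N<\infty$ throughout and run $(a)\Rightarrow(b)\Rightarrow(c)\Rightarrow(a)$, then derive $(a)\Leftrightarrow(d)$ from Theorem \ref{thm:1.4'}. The implication $(a)\Rightarrow(b)$ uses Part (1) to give $\mathcal{B}\textrm{-dim}\,N<\infty$, so $\mathcal{RQ}N$ admits a bounded dg-$\BT$ representative $B$; the short exact sequence
\[
0\to\dHom(\mathcal{RQ}X,B)\to\Hom(\mathcal{RQ}X,B)\to\lHom(\mathcal{RQ}X,B)\to 0
\]
then forces $\lHom(\mathcal{RQ}X,B)$ to be acyclic (its cohomology vanishes past the length of $B$), whence $\wte^{i}_{\AAB}(X,N)=0$ for all $i\in\Z$ and all $dg$-$\A$ complexes $X$. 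The implication $(b)\Rightarrow(c)$ is formal, since every $X\in\A$ is a stalk $dg$-$\A$ complex in degree zero. For $(c)\Rightarrow(a)$, set $d=\mathcal{GB}\textrm{-dim}\,N$ and aim at $\textrm{Z}_{-d}(\mathcal{RQ}N)\in\B$, which by Part (1) will yield $\mathcal{B}\textrm{-dim}\,N=d$. Theorem \ref{thm:3.9}(6) already places this syzygy in $\GB$, so the task is to produce $\Ext^{1}(A,\textrm{Z}_{-d}(\mathcal{RQ}N))=0$ for each $A\in\A$. This I would extract from the long exact sequence linking $\Yxt,\Ext,\wte$ by shifting the hypothesised single-degree Tate-Vogel vanishing along the syzygies of $\mathcal{RQ}N$ until it lands in the precise degree that controls the desired Ext.

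Finally, $(a)\Leftrightarrow(d)$ is a corollary of Theorem \ref{thm:1.4'}: once $\textrm{Z}_{n}(\mathcal{RQ}N)\in\B$ for $n$ sufficiently small, the Tate-Vogel terms for the second argument vanish and the comparison map $\varepsilon^{i}$ identifies $\oext^{i}_{\GB}(M,\textrm{Z}_{n}(\mathcal{RQ}N))$ with $\Ext^{i}(M,\textrm{Z}_{n}(\mathcal{RQ}N))$; conversely, the isomorphism in (d) forces $\wte^{i}_{\AAB}(M,\textrm{Z}_{n}(\mathcal{RQ}N))=0$ via Theorem \ref{thm:1.4'}, yielding (c) and hence (a).

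\textbf{Main obstacle.} The critical step is $(c)\Rightarrow(a)$: converting single-degree Tate-Vogel vanishing (over all modules $X\in\A$) into Ext vanishing at exactly the degree needed to upgrade $\textrm{Z}_{-d}(\mathcal{RQ}N)$ from $\GB$ to $\B$. Everything else is assembly from Theorem \ref{thm:3.9}, Theorem \ref{thm:1.4'}, and the lemma established in Part (1).
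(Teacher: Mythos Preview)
Your Part (1) and the overall architecture of Part (2) match the paper. The substantive gap is $(c)\Rightarrow(a)$, which you rightly flag as the crux. Your plan---shift the single-degree Tate--Vogel vanishing along syzygies of $\mathcal{RQ}N$ via the $\Yxt,\Ext,\wte$ long exact sequence until it lands on $\Ext^1(A,\textrm{Z}_{-d}(\mathcal{RQ}N))$---does not obviously work: that long exact sequence does not let you transport vanishing of $\wte$ across degrees, and aiming directly for $\textrm{Z}_{-d}(\mathcal{RQ}N)\in\B$ is stronger than needed. The paper (Corollary \ref{cor:4.4}) instead goes through Theorem \ref{theorem:3.6}, which computes $\wte^i_{\AAB}(X,N)\cong\h_{-i}(\Hom(X,T))$ for the totally $\B$-acyclic Tate complex $T$ in a Tate $\B$ resolution of $N$. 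Vanishing at the single degree $i$ for all $X\in\A$ says the sequence $\Hom(X,T_{1-i})\to\Hom(X,T_{-i})\to\Hom(X,T_{-i-1})$ is exact; from $0\to\textrm{Z}_{1-i}(T)\to T_{1-i}\to\textrm{Z}_{-i}(T)\to0$ with $T_{1-i}\in\A\cap\B$ one then extracts $\Ext^1(X,\textrm{Z}_{1-i}(T))=0$ for all $X\in\A$, i.e.\ $\textrm{Z}_{1-i}(T)\in\B$. Since the entries of $T$ lie in $\A\cap\B$, every subsequent cycle of $T$ is also in $\B$; choosing $j>\max\{i,\mathcal{GB}\textrm{-dim}N\}$ gives $\textrm{Z}_{-j}(\mathcal{RQ}N)=\textrm{Z}_{-j}(T)\in\B$, so $\mathcal{B}\textrm{-dim}N<\infty$ and Part (1) closes out.

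For $(a)\Leftrightarrow(d)$ you invoke Theorem \ref{thm:1.4'}, but that result only identifies $\oext^n_{\GB}$ with $\Yxt^n_{\AAB}$ for $n>1$; it does not relate the comparison map $\varepsilon^i$ to $\wte$-vanishing. The right tool is the Avramov--Martsinkovsky sequence (Theorem \ref{thm:C'}) and its Corollary \ref{cor:1.7}: for a module $K$ with $\mathcal{GB}\textrm{-dim}K<\infty$, one has $\mathcal{B}\textrm{-dim}K=\mathcal{GB}\textrm{-dim}K$ iff $\varepsilon^n(M,K)$ is an isomorphism for all $n$ and all $M\in\A$. Apply this with $K=\textrm{Z}_n(\mathcal{RQ}N)$: for $(a)\Rightarrow(d)$ pick $n$ small enough that $\textrm{Z}_n(\mathcal{RQ}N)\in\B$; for $(d)\Rightarrow(a)$ conclude $\mathcal{B}\textrm{-dim}\textrm{Z}_n(\mathcal{RQ}N)<\infty$, hence $\mathcal{B}\textrm{-dim}N<\infty$, and Part (1) finishes.
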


The proof of Theorem \ref{thm:4.4} makes use of the fact that relative and Tate-Vogel cohomologies can be connected by a long
exact sequence (see Proposition \ref{thm:C'} below).

The contents of this paper are arranged as follows: Section \ref{pre} contains notations and definitions for use throughout this paper. Section \ref{G-dimensions} is devoted to proving Theorem \ref{theorem:3.6}, which is based on some characterizations of the finiteness of Gorenstein $\A$ dimensions of complexes (see Proposition \ref{thm:3.9}). In Section \ref{Relative-homology}, we study relative cohomology groups for complexes
with finite Gorenstein $\A$ dimension and prove Theorem \ref{thm:1.4'}. In Section \ref{comparsion}, we first show that relative and Tate-Vogel cohomologies can be connected by a long exact sequence (see Proposition \ref{thm:C'}), and then we give the proof of Theorem \ref{thm:4.4}.

\vspace{2mm}
\section{\bf Preliminaries}\label{pre}
 Throughout this paper, $R$ is an associative ring.
 All ``$R$-modules" and ``complexes" mean ``left $R$-modules" and ``chain complexes of left $R$-modules", respectively.
 We use the term ``subcategory" to mean a ``full and additive subcategory that is closed under isomorphisms". $\mathcal{E}$ is the class of exact complexes. $\mathcal{P}$, $\mathcal{F}$ and $\mathcal{C}$ denote the classes of projective, flat and cotorsion $R$-modules, respectively.

Next we recall basic definitions and properties needed in the sequel. For more details the reader can consult \cite{AFH}, \cite{CFH}, \cite{EJ}, \cite{Gillespie2016} or \cite{Happel}.

\vspace{2mm}
{\bf  Complexes.} Let $\mathcal{D}$ be an additive category. We denote by $\mathbb{C}(\mathcal{D})$ the category of complexes in $\mathcal{D}$; the objects are complexes and morphisms are chain maps. We write the complexes homologically, so an object $X$ of $\mathbb{C}(\mathcal{D})$ is of the form
$$X=\xymatrix@C=15pt@R=15pt{\cdots \ar[r]&  X_{n+1} \ar[r]^{\partial_{n+1}^X}&X_n \ar[r]^{\partial_n^X}& X_{n-1}\ar[r]&\cdots.&}$$
If $X_i=0$ for $i\neq0$ we identify $X$ with the object of $\mathcal{D}$ in degree 0, and an object $M$ in $\mathcal{D}$ is thought of as the stalk complex concentrated in degree zero. Let $X_{\leqslant n}$ be the complex with $i$th component equal to $X_i$ for $i\leqslant n$ and to 0 for $i>n$, and $X_{\geqslant n}$ be the complex with $i$th component equal to $X_i$ for $i\geqslant n$ and to 0 for $i<n$. The \emph{$i$th shift} of $X$ is the complex $X[i]$ with $n$th component $X_{n-i}$ and differential $\partial_n^{X[i]}=(-1)^{i}\partial_{n-i}^X$. The \emph{mapping cone} of a morphism $\varphi:X\ra Y$ is the complex $\Con(\varphi)$ defined by $\Con(\varphi)_n$ $=Y_n \oplus X_{n-1}$ and $\partial_n^{\textrm{Con}(\varphi)}= \left(
                                                                 \begin{array}{cc}
                                                                   \partial_n^Y & \varphi_{n-1} \\
                                                                   0 & -\partial_{n-1}^X \\
                                                                 \end{array}
                                                               \right).$

\vspace{1mm}
A \emph{homomorphism} $\varphi: X\ra Y$ of degree $n$ is a family of $(\varphi_{i})_{i\in{\mathbb{Z}}}$ of homomorphisms  $\varphi_i:X_i\ra Y_{i+n}$ in $\mathcal{D}$. In this case, we set $|\varphi|=n$.  All such homomorphisms form an abelian group, denoted by $\textrm{Hom}_{\mathcal{D}}(X,Y)_n$; it is clearly isomorphic to $\prod_{i\in{\mathbb{Z}}}\textrm{Hom}_{\mathcal{D}}(X_i,Y_{i+n})$. We let $\textrm{Hom}_{\mathcal{D}}(X,Y)$ be the complex of $\mathbb{Z}$-modules with $n$th component $\textrm{Hom}_{\mathcal{D}}(X,Y)_n$ and differential $$\partial(\varphi)=\partial^{Y}\varphi-(-1)^{|\varphi|}\varphi \partial^X.$$
If $\mathcal{D}= \Mod R$ is the category of (left) $R$-modules, we write $\textrm{Hom}_{R}(X,Y)$ for $\textrm{Hom}_{\Mod R}(X,Y)$ for all complexes $X$ and $Y$.

For any $i\in{\mathbb{Z}}$, the cycles in $\textrm{Hom}_{\mathcal{D}}(X,Y)_i$ are the \emph{chain maps} $X\ra Y$ of degree $i$. A chain map of degree 0 is a morphism. Two morphisms $\beta$ and $\beta'$ in $\textrm{Hom}_{\mathcal{D}}(X,Y)_0$ are called \emph{chain homotopic}, denoted by $\beta\sim \beta'$, if there exists a degree $1$ homomorphism $\nu$ such that $\partial(\nu)={\beta-\beta'}$. A \emph{chain homotopy equivalence} is a morphism $\varphi:X\ra Y$ for which there exists a morphism $\psi:Y\ra X$ such that $\varphi \psi\sim \textrm{id}_{Y}$ and $ \psi \varphi\sim \textrm{id}_{X}$.

The \emph{chain homotopy category} of $\mathcal{D}$ will be denoted by $\mathbb{K}(\mathcal{D})$. Its objects are the same as $\mathbb{C}(\mathcal{D})$ and morphisms are the chain homotopy classes of morphisms of complexes.

If $\mathcal{D}= \Mod R$ is the category of (left) $R$-modules, we write $\mathbb{C}(R)$ (resp., $\mathbb{K}(R)$) for $\mathbb{C}(\Mod R)$ (resp., $\mathbb{K}(\Mod R)$). It is known  that if $\mathcal{D}$ is additive (resp., abelian) then so is  $\mathbb{C}(\mathcal{D})$. In particular,  $\mathbb{C}(R)$ is an abelian category and $\mathbb{K}(R)$ is an additive category. We use subscripts $+,\ -, \ b$ to denote boundedness conditions. For example, $\mathbb{C}^{+}(R)$ is the full subcategory of $\mathbb{C}(R)$ of left bounded  (or bounded above) complexes.  To every complex
$$X=\xymatrix@C=15pt@R=15pt{\cdots \ar[r]&  X_{n+1} \ar[r]^{\partial_{n+1}^X}&X_n \ar[r]^{\partial_n^X}& X_{n-1}\ar[r]&\cdots&}$$
in $\mathbb{C}(R)$, the \emph{$n$th homology module} of $X$ is the module $\h_n(X)=\ker(\partial_n^X)/\im(\partial_{n+1}^X)$. We also set $\textrm{Z}_{n}(X)=\ker(\partial_{n}^X)$, $\textrm{B}_{n}(X)=\im(\partial_{n+1}^X)$ and $\textrm{C}_{n}(X)=\coker(\partial_{n+1}^X)$.

\vspace{1mm}
A \emph{quasi-isomorphism} $\varphi: X\ra Y$ with $X$ and $Y$ in $\mathbb{C}(R)$ is a morphism such that the induced map $\h_n(\varphi):\h_n(X)\ra \h_n(Y)$ is an isomorphism for all $n\in{\mathbb{Z}}$. The morphism $\varphi$ is a quasi-isomorphism if and only if $\Con(\varphi)$ is exact. Two complexes $X$ and $Y$ are \emph{equivalent} \cite[A.1.11, p.164]{chr}, and denoted by $X \simeq Y$, if they can be linked by a sequence of quasi-isomorphisms with arrows in the alternating directions.

Let $\mathcal{H}$ be a subcategory of $\Mod R$. Then a complex $L$ is \emph{$\Hom(\mathcal{H},-)$ exact} (resp., \emph{$\Hom(-,\mathcal{H})$ exact}) if the complex $\Hom(M,L)$ (resp., $\Hom(L,M)$) is exact for each $M\in{\mathcal{H}}$.

\vspace{2mm}
{\bf  Cotorsion pairs.} Let $\mathcal{D}$ be an abelian category and $\mathcal{X}$ a subcategory of $\mathcal{D}$. For an object $M\in\mathcal{D}$, write $M\in{^{\perp}\mathcal{X}}$ (resp., $M\in{^{\perp_1}\mathcal{X}}$) if $\textrm{Ext}_{\mathcal{D}}^{\geqslant1}(M,X)=0$ (resp., $\textrm{Ext}_{\mathcal{D}}^{1}(M,X)=0$) for each $X\in\mathcal{X}$. Dually, we can define $M\in{\mathcal{X}^{\perp}}$ and $M\in{\mathcal{X}^{\perp_1}}$.

\vspace{1mm}
Following Enochs \cite{EJ}, Hovey \cite{HC} and Salce \cite{SL}, a \emph{cotorsion pair} is a pair of classes ($\mathcal{A}$, $\mathcal{B}$) in $\mathcal{D}$ such that
$\mathcal{A}^{\perp_1}=\mathcal{B}$ and $^{\perp_1}
\mathcal{B}=\mathcal{A}$. A cotorsion pair ($\mathcal{A}$, $\mathcal{B}$) is said to be \emph{hereditary} \cite{GT} if
$\textrm{Ext}_{\mathcal{D}}^{i}(A,B)=0$ for all $i\geqslant 1$ and all $A\in{\mathcal{A}}$ and
$B\in{\mathcal{B}}$. If we restrict the abelian category $\mathcal{D}$ to the category of chain complexes of $R$-modules or the category of $R$-modules, then by \cite{HC}, the condition that ($\mathcal{A}$, $\mathcal{B}$) is hereditary is equivalent to that if whenever $0\ra L'\ra L\ra L''\ra 0$ is exact with $L$, $L''\in{\mathcal{A}}$, then $L'\in{\mathcal{A}}$, or equivalently, if whenever $0\ra B'\ra B\ra B''\ra 0$ is exact with $B'$, $B\in{\mathcal{B}}$, then $B''\in{\mathcal{B}}$.

\vspace{1mm}
Let $M$ be an object in $\mathcal{D}$. A morphism $\phi: M\rightarrow X$ with $X\in
\mathcal{X}$ is called an \emph{$\mathcal{X}$ preenvelope} of $M$ if for any morphism $f:M\rightarrow X'$ with
$X'\in\mathcal{X}$, there is a morphism $g:X\rightarrow X'$ such
that $g\phi=f$. A monomorphism $\phi: M\rightarrow B$ with $B\in{\mathcal{X}}$ is said to be a
\emph{special $\mathcal{X}$ preenvelope} of $M$ if ${\rm coker}(\phi)\in
{^{\perp_1}\mathcal{X}}$. Dually we have the definitions of an
$\mathcal{X}$ precover and a special $\mathcal{X}$ precover.

\vspace{1mm}
A cotorsion pair ($\mathcal{A}$,
$\mathcal{B}$) in $\mathcal{D}$ is called \emph{complete} if every object $M$ of
$\mathcal{D}$ has a special $\mathcal{B}$ preenvelope and a special
$\mathcal{A}$ precover. If we choose $\mathcal{D}=\Mod R$ for some
ring $R$, the most obvious example of a complete hereditary
cotorsion pair is ($\mathcal{P}$, $\Mod R$). Perhaps the most
useful complete hereditary cotorsion pair is the flat cotorsion pair
($\mathcal{F}$, $\mathcal{C}$). Here $\mathcal{C}$ is the collection
of all modules $C$ such that $C\in{\mathcal{F}^{\perp_1}}$. Such
modules are called cotorsion modules.

\vspace{1mm}
In \cite{HCc} Hovey laid out a correspondence between (nice enough) model structures on a bicomplete abelian category
$\mathcal{D}$ and cotorsion pairs on $\mathcal{D}$. Essentially, a model structure on $\mathcal{D}$ is two complete cotorsion pairs ($\mathcal{Q}$,
$\mathcal{R}\cap\mathcal{W}$) and ($\mathcal{Q}\cap\mathcal{W}$, $\mathcal{R}$), where $\mathcal{Q}$ is the class of cofibrant objects, $\mathcal{R}$ is the class of fibrant objects and $\mathcal{W}$ is the class of trivial objects. And a model structure on $\mathcal{D}$ is determined by the above cotorsion pairs in the following way: the (trivial) cofibrations are the monomorphisms with (trivially) cofibrant cokernel, the (trivial) fibrations are the epimorphisms with (trivially) fibrant kernel and the weak equivalences are the maps that can be factored as a trivial cofibration followed by a trivial fibration. We refer to \cite{HM} and \cite{HC} for a more detailed discussion on this matter.

\vspace{2mm}
{\bf  Derived Categories.} The \emph{derived category} of the category of chain complexes of $R$-modules, denoted by $\mathbb{D}(R)$, is the category of chain complexes of $R$-modules localized at the class of quasi-isomorphisms (see \cite{HR,Verdier}). The symbol ``$\simeq$" is used to designate isomorphisms in $\mathbb{D}(R)$. The homological position and size of a complex $X$ are captured by the numbers supremum and infimum defined by $$\textrm{sup}X=\textrm{sup}\{i\in{\mathbb{Z}} \ | \ \h_i(X)\neq0\},\ \textrm{inf}X=\textrm{inf}\{i\in{\mathbb{Z}} \ | \ \h_i(X)\neq0\}.$$
By convention sup$X=-\infty$ and $\inf X=\infty$ if $X\simeq 0$. The full subcategories $\mathbb{D}^{+}(R)$, $\mathbb{D}^{-}(R)$ and $\mathbb{D}^{b}(R)$ consist of complexes $X$ with $\h_{i}(X)=0$ for, respectively, $i\gg0$, $i\ll0$ and $|i|\gg0$.

Denote by $\R \Hom(-,-)$ the \emph{right derived functor} of the homomorphism functor of complexes; by \cite{AFHd} and \cite{SR} no boundedness conditions are needed on the arguments. That is, for $X$, $Y$ $\in{\mathbb{D}(R)}$, the complexes $\R \Hom(X,Y)$ are uniquely determined up to isomorphism in $\mathbb{D}(R)$, and they have the usual functorial  properties. We set $\Ext^i(X,Y)=\h_{-i}(\R \Hom(X,Y))$ for $i\in{\mathbb{Z}}$. For modules $X$ and $Y$ this agrees with the notation of classical homological algebra.

\begin{df}\label{df:2.2} {\rm (\cite{sswG})} For every complex $X$ in $\mathbb{C}(R)$ with $X_{-n}=0=\h_{n}(X)$ for all $n>0$, the natural morphism $X\ra M=\h_{0}(X)$ is a quasi-isomorphism. In this event, $X$ is an \emph{$\mathcal{X}$-resolution} of $M$ if each $X_n\in{\mathcal{X}}$, and the associated exact sequence
$$X^{+} =
\cdots\rightarrow X_{n}\rightarrow
\cdots \rightarrow X_{1}\rightarrow X_{0}\rightarrow M\rightarrow 0$$
is the \emph{augmented $\mathcal{X}$-resolution} of $M$ associated to $X$. An $\mathcal{X}$-resolution $X$ of $M$ is \emph{proper} if $X^+$ is $\Hom(\mathcal{X},-)$ exact.
The \emph{$\mathcal{X}$-projective dimension} of $M$ is the quantity
$$\mathcal{X}\textrm{-pd}(M)=\inf\{\textrm{sup}\{n \geq0 \ | \  X_{n}\neq0 \} \ | \ X \ \textrm{is} \ \textrm{an} \ \mathcal{X}\textrm{-resolution} \ \textrm{of} \ M\}.$$
\end{df}
%
%
%
%

\begin{df}\label{df:2.0}{\rm (\cite{sswG})} Let $\mathcal{X}$ be a class of $R$-modules and $M$ an $R$-module. If $f:X\ra M$ is a proper $\mathcal{X}$-resolution, then for each integer $n$ and each $R$-module $N$, the $n$th \emph{relative cohomology group} $\textrm{Ext}^{n}_{\mathcal{X}}(M,N)$ is
$$\textrm{Ext}^{n}_{\mathcal{X}}(M,N)=\h_{-n}(\Hom(X,N)).$$

We refer to \cite[Section 8.2]{EJ}, \cite[2.4]{Hgd} and \cite[Section 4]{sswG} for a detailed discussion on this matter.
\end{df}

\section {\bf Proof of Theorem \ref{theorem:3.6}}\label{G-dimensions}
The main object of this section is to prove Theorem \ref{theorem:3.6}. More precisely, we first recall the definition of Gorenstein $\A$ dimensions of complexes, and then give some characterizations of complexes with finite Gorenstein $\mathcal{A}$ dimension. Especially, we shall establish a result, Proposition \ref{thm:3.9}, which will play a crucial role in the proof of Theorem \ref{theorem:3.6}.

\begin{df}(\cite[Definition 3.1]{YC})\label{df:2.1'} Let $R$ be a ring. An $R$-module $M$ is a \emph{Gorenstein $\A$-module} if there is a $\Hom(-,{\AB})$ exact exact sequence $\cdots \ra X_1\ra X_0\ra X_{-1}\ra X_{-2}\ra \cdots $ with each $X_{i}\in {{\A}}$ such that $M\cong\ker(X_{0}\ra X_{-1})$.

In what follows, we write $\GA$ for the class of Gorenstein $\A$-modules.
\end{df}

\begin{rem}{\rm  By \cite[Lemma 3.2]{YC}, we get that an $R$-module $M$ is a Gorenstein $\A$-module if and only if  $M\in{^{\perp}(\AB)}$ and there is a $\Hom(-,\AB)$ exact exact sequence $0\rightarrow M\rightarrow X^{0}\rightarrow X^{1}\rightarrow \cdots$ with each $X^{i}\in {{\AB}}$.
}
\end{rem}

A class $\mathcal{X}$ of $R$-modules is called \emph{projectively resolving} \cite{hG} if
$\mathcal{P}\subseteq\mathcal{X}$ and for every exact
sequence $0\rightarrow X'\rightarrow X\rightarrow X''\rightarrow 0$ of $R$-modules
with $X''\in\mathcal{X}$ the conditions $X\in\mathcal{X}$ and
$X'\in\mathcal{X}$ are equivalent.

\begin{lem}\label{prop:2.3} The following are true for any ring $R$:
 \begin{enumerate}
 \item The class $\GA$ is projectively resolving. Furthermore, $\GA$ is closed under direct coproducts and direct summands.
 \item Let $0\rightarrow X\rightarrow Y\rightarrow Z\rightarrow0 $ be an exact sequence of $R$-modules. If $Y\in{\GA}$ and $X\in{\GA}$, then $Z\in{\GA}$ if and only if $Z\in{^{\bot_{1}}(\AB)}$.
\end{enumerate}
\end{lem}
\begin{proof}The result holds by \cite[Proposition 3.3]{chenwenjing} and \cite[Proposition 3.3]{YC}.
\end{proof}

\begin{df}\label{df:3.1}{\rm (}\cite[Definition 3.3]{GT}{\rm )} {\rm Let $R$ be ring and $X$ a complex.
\begin{enumerate}
\item $X$ is called an $\A$ complex if it is exact and $\textrm{Z}_{n}X$ $\in$ $\A$ for all $n$.
\item $X$ is called a $\B$ complex if it is exact and $\textrm{Z}_{n}X$ $\in$ $\B$ for all $n$.
\item $X$ is called a $dg$-$\A$ complex if $X_{n}\in{\A}$ for each $n$, and $\Hom(X,B)$ is exact whenever $B$ is a $\B$ complex.
\item $X$ is called a $dg$-$\B$ complex if $X_{n}\in{\B}$ for each $n$, and $\Hom(A,X)$ is exact whenever $A$ is an $\A$ complex.
\end{enumerate} }
\end{df}

In the following, we denote the class of $\A$ (resp., $\B$) complexes by $\widetilde{\A}$ (resp., $\widetilde{\B}$) and the class of $dg$-$\A$ (resp., $dg$-$\B$) complexes by $dg\AT$ (resp.,
$dg\BT$). By \cite[Theorem 3.12]{GT}, we have
$dg$$\AT$ $\cap$ $\mathcal{E}$ $=$ $\AT$ and $dg$$\BT$ $\cap$
$\mathcal{E}$ $=$ $\BT$ whenever ($\A$, $\B$) is a complete hereditary cotorsion pair in $\Mod R$. In particular, if ($\A$, $\B$) $=$ ($\mathcal{P}$, $\Mod R$)  then $dg$-$\mathcal{P}$ complexes are exactly \emph{$dg$-projective} complexes (see \cite{AFHd,CFH}). We refer to \cite{AFHd}, \cite{CFH} and
\cite{GT} for a more detailed discussion on this matter.

\begin{lem}\label{lem:3.3}{\rm (}\cite[Corollary 2.8]{YL}{\rm )} If {\rm ($\mathcal{A}$, $\mathcal{B}$)} is a complete hereditary cotorsion pair in $\Mod R$, then the induced cotorsion pairs {\rm ($\widetilde{\mathcal{A}}$, $dg\widetilde{\mathcal{B}}$)} and {\rm ($dg$$\widetilde{\mathcal{A}}$, $\widetilde{\mathcal{B}}$)} in ${\C}(R)$ are both complete and hereditary. Furthermore, $dg$$\widetilde{\mathcal{A}}$ $\cap$ $\mathcal{E}$ $=$ $\widetilde{\mathcal{A}}$ and $dg\widetilde{\mathcal{B}}$ $\cap$
$\mathcal{E}$ $=$ $\widetilde{\mathcal{B}}$, where $\mathcal{E}$ is the class of exact complexes. So there exists a model structure on ${\C}(R)$, denoted by  {\rm${\C}(R)_{\mathcal{M}^{\B}_{\A}}$}, satisfying that:
\begin{enumerate}
\item the weak equivalences are the quasi-isomorphisms;
\item the cofibrations {\rm(}resp., trivial cofibrations{\rm)} are the monomorphisms whose
cokernels are in $dg\widetilde{\mathcal{A}}$ {\rm(}resp., $\widetilde{\mathcal{A}}${\rm)};
\item the fibrations {\rm(}resp., trivial fibrations{\rm)} are the epimorphisms whose kernels
are in $dg$$\widetilde{\mathcal{B}}$  {\rm(}resp., $\widetilde{\mathcal{B}}${\rm)}.
\end{enumerate}

In particular, $dg$$\widetilde{\mathcal{A}}$ is the class of cofibrant objects and $dg$$\widetilde{\mathcal{B}}$ is the class of fibrant objects.
\end{lem}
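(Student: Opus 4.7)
The plan is to verify the cotorsion pair assertions first and then combine them with Hovey's correspondence to produce the model structure, mostly by adapting Gillespie's technique for inducing cotorsion pairs on chain complexes.

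First I would establish the two induced cotorsion pairs. The standard tool here is the pair of adjunctions involving disk and sphere complexes $D^{n}(M)$ and $S^{n}(M)$ for an $R$-module $M$, which give natural isomorphisms of the form $\mathrm{Ext}^{1}_{\mathbb{C}(R)}(D^{n}(A),Y)\cong\mathrm{Ext}^{1}_{R}(A,Y_{n-1})$ and $\mathrm{Ext}^{1}_{\mathbb{C}(R)}(S^{n}(A),Y)\cong\mathrm{Ext}^{1}_{R}(A,\mathrm{Z}_{n-1}Y)$ whenever $Y$ is exact. Applying these to a complex $Y\in dg\widetilde{\mathcal{B}}$ (resp.\ $Y\in\widetilde{\mathcal{B}}$) one recovers the identifications $^{\perp_{1}}dg\widetilde{\mathcal{B}}=\widetilde{\mathcal{A}}$ and $\widetilde{\mathcal{B}}^{\perp_{1}\,\mathrm{op}}=dg\widetilde{\mathcal{A}}$ from the base cotorsion pair, and dually on the other side; together with the defining right-orthogonality of $dg\widetilde{\mathcal{A}}$ and $dg\widetilde{\mathcal{B}}$ this produces both cotorsion pairs.

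Next I would verify completeness. Since $(\mathcal{A},\mathcal{B})$ is complete in $\Mod R$, there exists a set $\mathcal{S}\subseteq\mathcal{A}$ that cogenerates it (one can take a set big enough by Eklof--Trlifaj). From $\mathcal{S}$ one builds the set $\{D^{n}(A),S^{n}(A):A\in\mathcal{S},n\in\mathbb{Z}\}$ and applies Eklof's lemma/the small object argument inside $\mathbb{C}(R)$; this gives the special precover and preenvelope for both $(\widetilde{\mathcal{A}},dg\widetilde{\mathcal{B}})$ and $(dg\widetilde{\mathcal{A}},\widetilde{\mathcal{B}})$. The hereditary property then follows because the base pair is hereditary: using the long exact sequence of Ext in $\mathbb{C}(R)$ and the closure of each class under the relevant short exact sequences (which transfers degree by degree from $(\mathcal{A},\mathcal{B})$), we get $\mathrm{Ext}^{\geq 1}_{\mathbb{C}(R)}(X,Y)=0$ for all admissible pairs $(X,Y)$. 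For the identifications $dg\widetilde{\mathcal{A}}\cap\mathcal{E}=\widetilde{\mathcal{A}}$ and $dg\widetilde{\mathcal{B}}\cap\mathcal{E}=\widetilde{\mathcal{B}}$, the $\supseteq$ directions are tautological, and the $\subseteq$ direction follows from the $\mathrm{Ext}^{1}$-identifications above applied to spheres $S^{n}(A)$ with $A\in\mathcal{A}$ (respectively $B\in\mathcal{B}$), forcing the cycles into $\mathcal{A}$ (respectively $\mathcal{B}$).

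Finally, I would invoke Hovey's correspondence. Take the thick class of weak objects to be $\mathcal{W}=\mathcal{E}$ (the class of exact complexes is obviously thick). The pairs $(\widetilde{\mathcal{A}},dg\widetilde{\mathcal{B}})=(\mathcal{Q}\cap\mathcal{W},\mathcal{R})$ and $(dg\widetilde{\mathcal{A}},\widetilde{\mathcal{B}})=(\mathcal{Q},\mathcal{R}\cap\mathcal{W})$ are both complete and hereditary, and the intersection identities established above are precisely the compatibility conditions Hovey requires. This yields the promised abelian model structure on $\mathbb{C}(R)$ with $dg\widetilde{\mathcal{A}}$ as cofibrants and $dg\widetilde{\mathcal{B}}$ as fibrants; a morphism is a weak equivalence iff its mapping cone lies in $\mathcal{W}=\mathcal{E}$, i.e.\ iff it is a quasi-isomorphism, and the (trivial) cofibration/fibration descriptions are read off Hovey's theorem. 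I expect the main technical hurdle to be the completeness step: transferring the cogenerating set from $\Mod R$ to $\mathbb{C}(R)$ and running the small object argument requires some care, particularly in ensuring that the cogenerated classes are exactly $dg\widetilde{\mathcal{B}}$ and $\widetilde{\mathcal{B}}$ rather than something strictly larger; everything else is either bookkeeping or a direct application of Hovey's machinery.
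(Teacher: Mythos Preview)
The paper does not prove this lemma; it is quoted verbatim from \cite[Corollaries 2.7 and 2.8]{YD} (with the model-structure consequence also attributed to \cite[Corollary 3.8]{YL} in the introduction). So there is no in-paper argument to compare against; your sketch is being measured against the approach of the cited references.

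Your outline is broadly the Gillespie strategy, and the orthogonality computations and the final appeal to Hovey's correspondence are fine. However, the completeness step contains a genuine gap. You write that since $(\mathcal{A},\mathcal{B})$ is complete, there is a set $\mathcal{S}\subseteq\mathcal{A}$ cogenerating it ``by Eklof--Trlifaj''. That implication runs the wrong way: Eklof--Trlifaj shows that a cotorsion pair cogenerated by a set is complete, not conversely. In the generality of the statement (an arbitrary complete hereditary pair in $\Mod R$), you cannot simply produce such a set, so the small-object argument in $\mathbb{C}(R)$ is not available as written. This is precisely why the references take a different route: \cite[Theorem 2.4]{YD} builds $dg\widetilde{\mathcal{B}}$-resolutions (and dually) by a direct degreewise inductive construction using only the special precovers/preenvelopes coming from completeness of $(\mathcal{A},\mathcal{B})$, with no set-theoretic input. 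If you want to repair your argument, either add the hypothesis that $(\mathcal{A},\mathcal{B})$ is cogenerated by a set (which is strictly stronger than what the lemma assumes), or replace the small-object step with the explicit resolution construction of \cite{YD}; the rest of your plan then goes through.
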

Some nice introductions to the basic ideal of a model category can be found in \cite{DS95,HM}.
\begin{fact}\label{fact:2.3}{\rm Let $M$ be a complex.  Then $M$ has a cofibrant replacement $p_{M}:\mathcal{Q}M\ra M$ in
${\C}(R)_{\mathcal{M}^{\B}_{\A}}$ and a fibrant replacement $i_{M}:M\ra \mathcal{R}M$ in ${\C}(R)_{\mathcal{M}^{\B}_{\A}}$, where $\mathcal{Q}M$ is cofibrant and $p_{M}$ is a trivial fibration, and
$\mathcal{R}M$ is fibrant and $i_{M}$ is a trivial cofibration. We can insist these exist functorially if one wishes. We
refer to \cite[Sections 4 and 5]{DS95} and \cite[Section 4]{Gillespie} for a detailed discussion on this
matter.}
\end{fact}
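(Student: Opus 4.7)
The plan is to derive both replacements directly from the two complete hereditary cotorsion pairs supplied by Lemma \ref{lem:3.3}, namely $(dg\widetilde{\A},\widetilde{\B})$ and $(\widetilde{\A},dg\widetilde{\B})$. In a model structure, an object $X$ is cofibrant exactly when $0\to X$ is a cofibration and fibrant exactly when $X\to 0$ is a fibration, so the existence of cofibrant and fibrant replacements is merely an application of the factorization axioms of the model structure described in Lemma \ref{lem:3.3}(2)(3). Thus the first step is to recall this dictionary explicitly and reduce the claim to factoring the canonical morphisms $0\to M$ and $M\to 0$.

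Next, for the cofibrant replacement I would use the completeness of the cotorsion pair $(dg\widetilde{\A},\widetilde{\B})$ from Lemma \ref{lem:3.3}: it provides a special $dg\widetilde{\A}$ precover of $M$, that is, a short exact sequence
\[
0\lr W\lr \mathcal{Q}M\s{p_M}\lr M\lr 0
\]
with $\mathcal{Q}M\in dg\widetilde{\A}$ and $W\in\widetilde{\B}$. Then $\mathcal{Q}M$ is cofibrant by Lemma \ref{lem:3.3}, and $p_M$ is an epimorphism whose kernel $W$ lies in $\widetilde{\B}$, which by Lemma \ref{lem:3.3}(3) is precisely a trivial fibration. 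Dually, for the fibrant replacement I would use the completeness of the cotorsion pair $(\widetilde{\A},dg\widetilde{\B})$ to produce a special $dg\widetilde{\B}$ preenvelope
\[
0\lr M\s{i_M}\lr \mathcal{R}M\lr A\lr 0
\]
with $\mathcal{R}M\in dg\widetilde{\B}$ and $A\in\widetilde{\A}$; here $\mathcal{R}M$ is fibrant, and $i_M$ is a monomorphism whose cokernel is in $\widetilde{\A}$, hence a trivial cofibration by Lemma \ref{lem:3.3}(2).

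Finally, for the functoriality assertion, I would invoke the standard small object argument: the cotorsion pairs in Lemma \ref{lem:3.3} are cogenerated by sets (the inducing set from the module-level cotorsion pair together with the usual set of disks/spheres used in Gillespie's construction of lifted model structures on $\mathbb{C}(R)$), so Quillen's small object argument produces functorial factorizations of every morphism, and in particular functorial factorizations of $0\to M$ and $M\to 0$. Applying this to the identity morphisms between complexes transports naturality in $M$ to $\mathcal{Q}M$, $\mathcal{R}M$, $p_M$, and $i_M$. I would cite \cite[\S 4--5]{DS95} and \cite[\S 4]{Gillespie} for the formal execution of the small object argument in this setting; the only mild subtlety is to verify that the generating sets inherited from the hypotheses of Lemma \ref{lem:3.3} satisfy the smallness required, which in the chain-complex setting is automatic because all objects are small with respect to the relevant transfinite compositions of monomorphisms. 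No serious obstacle arises, and the fact follows.
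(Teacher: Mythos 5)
Your existence argument is correct and is precisely the paper's own point of view: the paper states this as a Fact with only references, and its Remark \ref{rem:3.2} records exactly the identification you make — a cofibrant replacement is a special $dg\AT$ precover of $M$ (an epimorphism onto $M$ with kernel in $\BT$, hence a trivial fibration, from an object of $dg\AT$, hence cofibrant), and a fibrant replacement is a special $dg\BT$ preenvelope (a monomorphism with cokernel in $\AT$, hence a trivial cofibration, into an object of $dg\BT$), both supplied by the completeness of the two induced cotorsion pairs in Lemma \ref{lem:3.3}.

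The one caveat concerns the functoriality aside. You justify it via the small object argument by asserting that the induced cotorsion pairs on $\mathbb{C}(R)$ are cogenerated by sets "inherited from the module-level cotorsion pair"; but the standing hypothesis is only that $(\A,\B)$ is a complete hereditary cotorsion pair in $\Mod R$, and completeness alone does not provide a cogenerating set, so the smallness input to Quillen's argument is not automatic from the hypotheses. Since the paper itself only remarks that the replacements ``can be insisted to exist functorially if one wishes'' (citing \cite{DS95} and \cite{Gillespie}) and never uses functoriality in the sequel, this is peripheral; but to make that clause rigorous you would need either the additional assumption that $(\A,\B)$ is cogenerated by a set, or a check that the construction of the induced complete cotorsion pairs in \cite{YD} can be carried out functorially. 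The non-functorial statement, which is all that is needed later, is fully proved by your argument.
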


\begin{df}\label{df:2.3}{\rm (}\cite[Definition 3.1]{HuDing}{\rm )}  Let $R$ be a ring and $M$ a complex.
A \emph{cofibrant-fibrant} resolution of $M$ is a diagram $\xymatrix@C=0.8cm{\mathcal{QR}M \ar[r]^{p_{\mathcal{R}M}} & \mathcal{R}M & M \ar[l]_{i_{M}}}$ of morphisms of complexes with $p_{\mathcal{R}M}$ a cofibrant replacement in ${\C}(R)_{\mathcal{M}^{\B}_{\A}}$ and $i_{M}$ a
fibrant replacement in ${\C}(R)_{\mathcal{M}^{\B}_{\A}}$.
\end{df}
\begin{rem}\label{rem:3.2}{\rm  We note that $\mathcal{QR}M$ in the above definition is in  $dg\AT \cap dg\BT$. As far as the notions of special $dg\AT$ precovers and special $dg\BT$ preenvelopes are concerned, a cofibrant replacement $p_{M}$ is exactly a special  $dg\AT$ precover of $M$ and a fibrant replacement $i_{M}$ is exactly a special  $dg\BT$ preenvelope of $M$.}
\end{rem}

\begin{df}(\cite[Definition 4.1]{YC}) \label{df:3.8'}{\rm Let $R$ be a ring and $M$ a complex.
\begin{enumerate}
\item A $dg\AT$-resolution of a complex $M$ is a quasi-isomorphism $X\ra M$ with $X\in{dg\AT}$;

 \item A $dg\AT$-resolution $X\ra M$ is proper if $\textrm{Hom}_{R}(X',X)\ra \textrm{Hom}_{R}(X',M)\ra 0$ is exact for any $X'\in{dg\AT}$.

 \item The \emph{Gorenstein $\A$ dimension} of $M$, denoted by $\mathcal{GA}$-dim$M$, is less than or equal to $n$, if there exists a proper $dg\AT$-resolution $X\ra M$ with $\sup(X)\leqslant n$ and $\textrm{C}_{n}(X)\in{\GA}$. If $\mathcal{GA}$-dim$M$ $\leqslant n$ for all integers $n$ then $\mathcal{GA}$-dim$M$ $=-\infty$; if $\mathcal{GA}$-dim$M$ $\leqslant n$ does not hold for any $n$ then $\mathcal{GA}$-dim$M$ $=\infty$.
\end{enumerate}}
\end{df}

\begin{df}\label{df:3.8}{\rm Let $R$ be a ring.
\begin{enumerate}
\item An exact complex $X$ is called \emph{totally $\A$-acyclic} if each entry of $X$ belongs to $\AB$ and $\textrm{C}_{i}(X)$ is a Gorenstein $\A$-module for every $i\in \mathbb{Z}$.
\item A \emph{complete $\A$ resolution} of a complex $M$ is a diagram  $T\s{\tau} \ra \mathcal{QR}M\ra \mathcal{R}M\leftarrow M$ of morphisms of complexes with $\mathcal{QR}M\ra \mathcal{R}M\leftarrow M$ a cofibrant-fibrant resolution of $M$ such that $T$ is a totally $\A$-acyclic complex and $\tau_{i}$ is bijective for all $i\gg0$. Moreover, a complete $\A$ resolution resolution is \emph{split} if $\tau_{i}$ is a split epimorphism for all $i\in{\mathbb{Z}}$.
\end{enumerate}}
\end{df}

\begin{rem} {\rm Let $M$ be a complex.
\begin{enumerate}
\item If we replace the cotorsion pair ($\A$, $\B$) with the
cotorsion pair ($\mathcal{P}$, $\Mod R$) in the definition above,
then one easily checks that the complete
$\mathcal{P}$ resolution of $M$ defined here is the complete resolution
of $M$ given by Veliche in \cite[Definition 2.2.1]{vg}.

\item If we replace the cotorsion pair ($\A$, $\B$) with the flat
cotorsion pair ($\mathcal{F}$, $\mathcal{C}$) in the definition above, then one can show that the complete $\mathcal{F}$ resolution of $M$ here
is the complete flat resolution
of $M$ defined by Hu and Ding in \cite[Definition 5.3]{HuDing}.
\end{enumerate}}
\end{rem}

\begin{lem}\label{lem:3.3''} Let $T\s{\tau}\ra \mathcal{QR}M\ra \mathcal{R}M\leftarrow M$ be a complete $\A$ resolution of $M$. If $n$ is an integer such that $\tau_{i}$ is bijective for all $i\geqslant n$, then there exist a complete $\A$ resolution $T'\s{\tau'}\ra \mathcal{QR}M\ra \mathcal{R}M\leftarrow M$ with each $\tau'_{i}$ a split epimorphism such that $\tau'_{i}$ is bijective for all $i\geqslant n$ and a homotopy equivalence $\alpha:T\ra T'$ such that $\tau=\tau' \alpha$ and {\rm$\alpha_{i}=\textrm{id}_{T_{i}}$} for all $i\geqslant n$.
\end{lem}
\begin{proof} The proof is similar to that of \cite[Construction 3.7]{AM}.
\end{proof}

\begin{lem}\label{lem:3.4}{\rm(}\cite[Lemma 3.5]{HuDing}{\rm)} If $F\in$ $dg\AT$ and $X\in$ $dg\BT$, then $\R\Hom(F,X)$ can be represented by $\Hom(F,X)$.
\end{lem}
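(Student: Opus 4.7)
The plan is to compute $\R\Hom(F,X)$ by resolving $X$ with a $dg$-injective complex and then show that the natural comparison map from $\Hom(F,X)$ to the representative is a quasi-isomorphism, using only the definition of a $dg$-$\A$ complex together with the elementary fact that $\mathcal{I}\subseteq \B$ in any cotorsion pair.

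First, I would fix a $dg$-injective resolution $\iota\colon X\to I$ of $X$, that is, a quasi-isomorphism with $I$ a $dg$-injective complex; such a resolution exists by the standard constructions in \cite{AFHd}. By the very definition of the right derived functor of $\Hom$ recalled in Section \ref{pre}, the complex $\R\Hom(F,X)$ is represented in $\mathbb{D}(\mathbb{Z})$ by $\Hom(F,I)$. Hence it suffices to prove that the induced chain map
\[
\Hom(F,\iota)\colon \Hom(F,X)\longrightarrow \Hom(F,I)
\]
is a quasi-isomorphism.

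Next, I would analyze the mapping cone $C=\Con(\iota)$. Because $\iota$ is a quasi-isomorphism, $C$ is exact. In degree $n$ we have $C_n=I_n\oplus X_{n-1}$. The summand $I_n$ is injective, and any injective module lies in $\B$ since $\Ext^{1}(-,\mathcal{I})=0$ and $\B={\A}^{\perp_1}$. The summand $X_{n-1}$ lies in $\B$ because $X$ is in $dg\widetilde{\B}$ and thus has components in $\B$. Since $\B$ is closed under extensions (it is the right class of a cotorsion pair) and therefore under finite direct sums, $C_n\in\B$ for every $n$. Hence $C$ is an exact complex whose components all lie in $\B$, i.e.\ $C\in\widetilde{\B}$ (using the identification $dg\widetilde{\B}\cap\mathcal{E}=\widetilde{\B}$ from Lemma \ref{lem:3.3}).

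The last step invokes the very definition of a $dg$-$\A$ complex: since $F\in dg\widetilde{\A}$ and $C\in\widetilde{\B}$, the complex $\Hom(F,C)$ is exact. A direct degree-wise computation shows that $\Hom(F,-)$ carries $\Con(\iota)$ to $\Con(\Hom(F,\iota))$ (this is where the internal Hom of complexes is additive in each degree and compatible with the mapping-cone differential up to standard signs). Consequently $\Con(\Hom(F,\iota))$ is exact, so $\Hom(F,\iota)$ is a quasi-isomorphism, and we conclude $\Hom(F,X)\simeq \Hom(F,I)\simeq \R\Hom(F,X)$ in $\mathbb{D}(\mathbb{Z})$.

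The main obstacle, such as it is, is purely formal: one must verify that $C$ belongs to $\widetilde{\B}$ and not merely to $dg\widetilde{\B}$, so that one can directly apply the defining $\Hom$-exactness property of $dg\widetilde{\A}$; and one must correctly identify $\Hom(F,\Con(\iota))$ with $\Con(\Hom(F,\iota))$ so that the vanishing of homology of the former translates to the desired quasi-isomorphism. Both points are handled by the observations above, and a symmetric argument using a $dg$-projective resolution $P\to F$ (whose mapping cone is an $\A$ complex because $\mathcal{P}\subseteq\A$) gives an alternative route to the same conclusion.
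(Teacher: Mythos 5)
The paper itself gives no argument for this lemma (it simply cites \cite[Lemma 3.5]{HuDing}), so your proposal must stand on its own; its overall strategy (resolve $X$ by a $dg$-injective complex $I$, and show $\Hom(F,\iota)$ is a quasi-isomorphism by proving $\Hom(F,\Con(\iota))$ is exact) is indeed the standard route. However, there is a genuine gap at the step where you conclude $C=\Con(\iota)\in\BT$: you verify only that $C$ is exact with all components in $\B$, and then invoke $dg\BT\cap\mathcal{E}=\BT$. That identity applies to complexes already known to lie in $dg\BT$; an exact complex with entries in $\B$ need not be a $\B$ complex, i.e.\ need not have cycles in $\B$. For instance, for the cotorsion pair $(\Mod R,\mathcal{I})$ with $R=k[x]/(x^2)$, the exact complex $\cdots\xrightarrow{\,x\,}R\xrightarrow{\,x\,}R\xrightarrow{\,x\,}\cdots$ has injective components but cycles isomorphic to $k$, which is not injective. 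So as written the inference is false in general, and without $C\in\BT$ you cannot apply the defining property of $F\in dg\AT$ to conclude that $\Hom(F,C)$ is exact. The same objection applies to your closing remark about the cone of a $dg$-projective resolution $P\to F$: exactness plus entries in $\A$ does not by itself put that cone in $\AT$.

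The gap is repairable with one extra observation, which is what the cited argument in effect uses. Since $I$ is $dg$-injective and every $\A$ complex is exact, $\Hom(A,I)$ is exact for all $A\in\AT$, so $I\in dg\BT$; also $X[1]\in dg\BT$ because $dg\BT$ is closed under shifts. The mapping cone sits in the degreewise split short exact sequence $0\to I\to C\to X[1]\to 0$, and $dg\BT$, being the right half of the complete hereditary cotorsion pair $(\AT,dg\BT)$ on $\mathbb{C}(R)$ from Lemma \ref{lem:3.3}, is closed under extensions; hence $C\in dg\BT$. Only now does $dg\BT\cap\mathcal{E}=\BT$ give $C\in\BT$, after which your argument goes through: $\Hom(F,C)\cong\Con(\Hom(F,\iota))$ is exact, so $\Hom(F,\iota)$ is a quasi-isomorphism and $\Hom(F,X)$ represents $\R\Hom(F,X)$.
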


\begin{lem}\label{prop:3.5} If $A\simeq P$ in $\mathbb{D}^{+}(R)$ with $P$ a $dg$-projective complex and $A$ a $dg$-$\A$ complex, then
$$\Ext^{1}({\rm C}_{n}(A),B)\cong\Ext^{1}({\rm C}_{n}(P),B)$$
\noindent for any $B\in{\B}$ and any integer $n\geqslant$ $\sup P$.
\end{lem}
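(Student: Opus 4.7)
The plan is to interpret $\Ext^{1}(A,\textrm{Z}_{n}(X))$ as a homology group of the Hom complex $\Hom(A,X)$, do the same for $\Ext^{1}(A,\textrm{Z}_{n}(I))$, and then transfer the isomorphism $X\simeq I$ through $\R\Hom(A,-)$ to conclude.

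First I would check that the stalk complex $A$ (in degree $0$) lies in $dg\AT$: its entries are trivially in $\A$, and for every $B\in\BT$ the complex $\Hom(A,B)$ is exact, which follows by applying $\Hom(A,-)$ to the short exact sequences $0\to\textrm{Z}_{i}(B)\to B_{i}\to\textrm{Z}_{i-1}(B)\to 0$ together with the vanishing $\Ext^{1}(A,\textrm{Z}_{i}(B))=0$ (since $\textrm{Z}_{i}(B)\in\B$, $A\in\A$ and $(\A,\B)$ is hereditary). Lemma \ref{lem:3.4} then yields $\R\Hom(A,X)\simeq\Hom(A,X)$ and $\R\Hom(A,I)\simeq\Hom(A,I)$ in the derived category of abelian groups.

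The core step is to establish
$$\Ext^{1}(A,\textrm{Z}_{n}(X))\cong\h_{n-1}(\Hom(A,X)).$$
Since $\inf I\geqslant n$ and $X\simeq I$, one has $\h_{n-1}(X)=0$, so $\textrm{B}_{n-1}(X)=\textrm{Z}_{n-1}(X)$ and there is a short exact sequence $0\to\textrm{Z}_{n}(X)\to X_{n}\to\textrm{Z}_{n-1}(X)\to 0$. Applying $\Hom(A,-)$ and using $\Ext^{1}(A,X_{n})=0$ (as $X_{n}\in\B$), one obtains
$$\Ext^{1}(A,\textrm{Z}_{n}(X))\cong\coker\bigl[\Hom(A,X_{n})\to\Hom(A,\textrm{Z}_{n-1}(X))\bigr].$$
Left-exactness of $\Hom(A,-)$ identifies $\Hom(A,\textrm{Z}_{n-1}(X))$ with $\ker[\Hom(A,X_{n-1})\to\Hom(A,X_{n-2})]$, and the factorization of $\partial_{n}^{X}$ through $\textrm{Z}_{n-1}(X)$ shows the above cokernel is precisely $\h_{n-1}(\Hom(A,X))$. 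The parallel argument, with $I_{n}\in\mathcal{I}$ supplying the $\Ext^{1}$-vanishing, gives $\Ext^{1}(A,\textrm{Z}_{n}(I))\cong\h_{n-1}(\Hom(A,I))$.

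Finally, since $X\simeq I$ in $\mathbb{D}^{-}(R)$, the derived functor $\R\Hom(A,-)$ furnishes an isomorphism $\R\Hom(A,X)\simeq\R\Hom(A,I)$, and passing to $\h_{n-1}$ together with the two presentations above produces the claimed isomorphism. The only delicate step is the dimension-shifting identification with $\h_{n-1}(\Hom(A,X))$; its validity rests squarely on the hypothesis $\inf I\geqslant n$, which is exactly what forces $\textrm{B}_{n-1}(X)=\textrm{Z}_{n-1}(X)$ and allows the boundary/cycle submodules of $X$ at degree $n-1$ to coincide.
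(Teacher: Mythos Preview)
Your proof is correct and follows essentially the same route as the paper: both identify $\Ext^{1}(A,\textrm{Z}_{n}(X))$ with $\h_{n-1}(\Hom(A,X))$, do the same for $I$, and then invoke $X\simeq I$ together with Lemma \ref{lem:3.4} to match the two. The only cosmetic difference is that the paper reaches the identification via the quasi-isomorphism $\textrm{Z}_{n}(X)\simeq X_{\leqslant n}[-n]$ and the fact that truncations of $dg$-$\B$ complexes stay $dg$-$\B$, whereas you obtain it by the direct dimension-shifting argument using the short exact sequence $0\to\textrm{Z}_{n}(X)\to X_{n}\to\textrm{Z}_{n-1}(X)\to 0$; the content is the same.
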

\begin{proof} By assumption, we get that sup$P$ $=$ sup$A$ $\leqslant n<\infty$. For each $B\in{\B}$, we have

\vspace{2mm}
\hspace{36mm}$\Ext^{1}({\rm C}_{n}(A),B) = \h_{-1}(\R\Hom({\rm C}_{n}(A),B))$

\vspace{1mm}
\hspace{64mm}$\cong\h_{-1}(\R\Hom(\Sigma^{-n}(A_{\geqslant n}),B))$

\vspace{1mm}
\hspace{64mm}$=\h_{-n-1}(\R\Hom(A_{\geqslant n},B))$

\vspace{1mm}
\hspace{64mm}$\cong\h_{-n-1}(\Hom(A_{\geqslant n},B))$

\vspace{1mm}
\hspace{64mm}$=\h_{-n-1}(\Hom(A,B))$,

\vspace{1mm}
\hspace{64mm}$\cong\h_{-n-1}(\R\Hom(A,B))$

\vspace{1mm}
\hspace{64mm}$\cong\h_{-n-1}(\Hom(P,B))$

\vspace{1mm}
\hspace{64mm}$=\h_{-1}(\Hom(\Sigma^{-n}(P_{\geqslant n}),B))$

\vspace{1mm}
\hspace{64mm}$\cong\h_{-1}(\R\Hom({\rm C}_{n}(P),B))$

\vspace{1mm}
\hspace{64mm}$=\Ext^{1}({\rm C}_{n}(P),B)$,

\vspace{2mm}
\noindent where the second and third isomorphisms follow from \cite[Lemma 3.4]{GT} and Lemma \ref{lem:3.4}. This completes the proof.
\end{proof}

The next result parallels \cite[Proposition 5.4]{HuDing}, which is a key result to prove Theorem \ref{theorem:3.6}.

\begin{prop}\label{thm:3.9} Let $M$ be a complex. Then the following are equivalent for each integer $n$:
\begin{enumerate}
\item {\rm$\GAT$-dim$M$ $\leqslant n$};

\item {\rm sup$M$} $\leqslant n$ and for every $dg\AT$-resolution $X\ra M$  the object  {\rm$\textrm{C}_{n}$($X$) $\in{\GA}$};

\item {\rm sup$M$} $\leqslant n$ and {\rm$\textrm{C}_{n}$($P$)} $\in{\GA}$ for any $P\simeq M$ with $P$ $dg$-projective;

\item {\rm sup$M$} $\leqslant n$ and there exists a $dg$-projective complex $P$ with $P\simeq M$ such that {\rm$\textrm{C}_{n}$($P$) $\in{\GA}$};

\item {\rm sup$M$} $\leqslant n$ and {\rm$\textrm{C}_{n}$($X$) $\in{\GA}$} for any $X\simeq M$ with $X$ a $dg$-$\A$ complex;

\item  For each cofibrant-fibrant resolution $\mathcal{QR}M\ra \mathcal{R}M\leftarrow M$ of $M$, there exists a complete $\A$ resolution $T\s{\tau} \ra \mathcal{QR}M\ra \mathcal{R}M\leftarrow M$ of $M$ with each $\tau_{i}$ a split epimorphism such that $\tau_{i}$ $=$ {\rm id$_{T_{i}}$} for $i\geqslant n$.
\end{enumerate}
\noindent Furthermore, if {\rm $\GAT$-$\textrm{dim}M < \infty$}, then
\vspace{2mm}
{\rm\begin{center}
{$\GAT$-$\textrm{dim}M=\textrm{sup}\{-\inf\R\Hom(M,X) \ | \
X\in{\mathcal{A}\cap\mathcal{B}} \}.$ }
\end{center}}
\end{prop}

\begin{proof} 
$(1)\Leftrightarrow(2)$ holds by \cite[Proposition 4.2]{YC}.

$(2)\Rightarrow(3)$ follows from \cite[1.4.P, p.132]{AFHd}.

$(3)\Rightarrow(4)$ is trivial.

$(4)\Rightarrow(5)$. By hypothesis, there exists a $dg$-projective complex $P$ with $P\simeq M$ such that {\rm$\textrm{C}_{n}$($P$) $\in{\GA}$}. Let $A$ be a $dg$-$\A$ complex with $A\simeq M$. Thus $A\simeq P$, and hence there is a quasi-isomorphism $\varphi:P\ra A$ by \cite[1.4.P, p.132]{AFHd}.

If $\varphi$ is surjective, then there is an exact sequence $0\ra K\ra P\s{\varphi}\ra A\ra 0$ of complexes with $K$ an exact complex. Since both $P$ and $A$ are $dg$-$\A$ complexes, $K$ is a $dg$-$\A$ complex by Lemma \ref{lem:3.3}.  Let $N$ be any $R$-module in $\AB$. Then $\Ext^{1}(\textrm{C}_{n}(P),N)=0$ by hypothesis. By Lemma \ref{prop:3.5}, we get that $\Ext^{1}(\textrm{C}_{n}(A),N)=0$.
Note that both $P$ and $A$ are $dg$-$\A$ complexes and $K$ is an exact complex. Then $K\in{\AT}$ and $\textrm{C}_{n}(K)\in{\A}$. Thus we have an exact sequence $0\ra \textrm{C}_{n}(K) \ra \textrm{C}_{n}(P)\ra \textrm{C}_{n}(A)\ra 0 $ with $\textrm{C}_{n}(K)\in{\A}$  and $\textrm{C}_{n}(P)\in{\GA}$, and so $\textrm{C}_{n}(A)\in{\GA}$ by Lemma \ref{prop:2.3}(2).

Suppose that $\varphi$ is not surjective. By Lemma \ref{lem:3.3}, there is a special $\widetilde{\A}$ precover $G\ra A$ of $A$. Thus ${P\oplus G}\ra A$ is a surjective quasi-isomorphism with $P\oplus G\in{dg\AT}$ such that $\textrm{C}_{n}(P\oplus G)\cong {\textrm{C}_{n}(P)\oplus \textrm{C}_{n}(G)}$. Note that $\textrm{C}_{n}(P\oplus G)\in{\GA}$ by the proof above, and so $\textrm{C}_{n}(P)\in{\GA}$ by Lemma \ref{prop:2.3}(1).

$(5)\Rightarrow(2)$ is trivial.

$(5)\Ra(6)$.  By Lemma \ref{lem:3.3''}, it suffices to show that there exists a complete $\A$ resolution $T\s{\tau}\ra \mathcal{QR}M\ra \mathcal{R}M\leftarrow M$ of $M$ with $\tau_{i}$ $=$ id$_{T_{i}}$ for $i\geqslant n$.
By hypothesis, there exists a cofibrant-fibrant resolution $\mathcal{QR}M\ra \mathcal{R}M\leftarrow M$ of $M$
such that $\textrm{C}_{n}$($\mathcal{QR}M$) $\in{\GA}$.
Then there exists a $\Hom(-,\AB)$ exact exact sequence $ 0\rightarrow \textrm{C}_{n}(\mathcal{QR}M)\rightarrow (\mathcal{QR}M)_{0}\rightarrow (\mathcal{QR}M)_{-1}\rightarrow \cdots$ with each $(\mathcal{QR}M)_{i}\in {\AB}$.
Let $\widehat{X}$ $=$ $\Sigma^{n-1} X$ where $X$ is the complex $0\rightarrow (\mathcal{QR}M)_{0}\rightarrow (\mathcal{QR}M)_{-1}\rightarrow \cdots$.
Note that $\mathcal{QR}M\in{dg\AT \cap dg\BT}$.
Thus there exists a morphism $\gamma:\widehat{X}\rightarrow (\mathcal{QR}M)_{\leqslant {n-1}}$ such that the following diagram
$$\xymatrix{0\ar[r]&\textrm{C}_{n}(\mathcal{QR}M)\ar@{=}[d]\ar[r]&\widehat{X}_{n-1}\ar[r] \ar[d]^{\gamma_{n-1}}&\widehat{X}_{n-2}\ar[r] \ar[d]^{\gamma_{n-2}}&\cdots\\
0\ar[r]&\textrm{C}_{n}(\mathcal{QR}M)\ar[r]&(\mathcal{QR}M)_{n-1}\ar[r] &(\mathcal{QR}M)_{n-2}\ar[r]
&\cdots}$$
commutes. Let $T$ be the complex obtained by splicing
$(\mathcal{QR}M)_{\geqslant n}$ and $\widehat{X}$ along $\textrm{C}_{n}(\mathcal{QR}M)$. One
easily checks that $T$ is totally $\A$-acyclic. Set
$$\tau_i = \bigg\{\begin{array}{cc}
                        \gamma_i & \textrm{for} \ i<n, \\
                        \textrm{id}_{(\mathcal{QR}M)_{i}} & \textrm{for} \ i\geqslant n.
                      \end{array}$$
Thus $\tau:T\ra \mathcal{QR}M$ is a morphism, and so the diagram  $T\s{\tau}\ra \mathcal{QR}M\ra \mathcal{R}M\leftarrow M$ is a complete $\A$ resolution.

$(6)\Ra(4)$. By hypothesis, there is a complete $\A$ resolution $T\s{\tau}\rightarrow \mathcal{QR}M\rightarrow\mathcal{R}M\leftarrow M$ such that $\tau_{\geqslant n}: T_{\geqslant n}\rightarrow (\mathcal{QR}M)_{\geqslant n}$ is an isomorphism of complexes. Hence $\textrm{C}_{n}$($\mathcal{QR}M$) $\cong$ $\textrm{C}_{n}$($T$) and \textrm{H}$_{i}(T)$ $\cong$ \textrm{H}$_{i}(\mathcal{QR}M)=0$ for all $i\geqslant n$. So $\textrm{C}_{n}$($\mathcal{QR}M$) $\in {\GA}$ and sup$M$ = sup$\mathcal{QR}M$ $\leqslant n$.
By \cite[1.6, p.134]{AFHd}, there exists a $dg$-projective complex $P$ with $P\simeq M$. Thus $\mathcal{QR}M\simeq P$, and hence there is a quasi-isomorphism $\varphi:P\ra \mathcal{QR}M$ by \cite[1.4.P, p.132]{AFHd}.

If $\varphi$ is surjective, then there is an exact sequence $0\ra K\ra P\s{\varphi}\ra \mathcal{QR}M\ra 0$ of complexes with $K$ an exact complex. Since both $P$ and $\mathcal{QR}M$ are $dg$-$\A$ complexes, $K$ is a $dg$-$\A$ complex by Lemma \ref{lem:3.3}. Note that $K$ is an exact complex. Then $K\in{\AT}$ and $\textrm{C}_{n}(K)\in{\A}$. Thus we have an exact sequence $0\ra \textrm{C}_{n}(K) \ra \textrm{C}_{n}(P)\ra \textrm{C}_{n}(\mathcal{QR}M)\ra 0 $ with $\textrm{C}_{n}(K)\in{\A}$  and $\textrm{C}_{n}(\mathcal{QR}M)\in{\GA}$, and so $\textrm{C}_{n}(P)\in{\GA}$ by Lemma \ref{prop:2.3}(1).

Suppose that $\varphi$ is not surjective. By Lemma \ref{lem:3.3}, there is a special $\widetilde{\A}$ precover $G\ra \mathcal{QR}M$ of $\mathcal{QR}M$. Thus ${P\oplus G}\ra \mathcal{QR}M$ is a surjective quasi-isomorphism with $P\oplus G\in{dg\AT}$ such that $\textrm{C}_{n}(P\oplus G)\cong {\textrm{C}_{n}(P)\oplus \textrm{C}_{n}(G)}$. Note that $\textrm{C}_{n}(P\oplus G)\in{\GA}$ by the proof above, and so $\textrm{C}_{n}(P)\in{\GA}$ by Lemma \ref{prop:2.3}(1), as desired.

To show the last claim, $\GAT\textrm{-dim}M=\textrm{sup}\{-\inf\R\Hom(M,X) \ | \ X\in{\AB} \}$ holds when $\GAT\textrm{-dim}M = -\infty$ by noting that $\GAT\textrm{-dim}M = -\infty$ if and only if $M$ is exact. By assumption, we assume that $\GAT\textrm{-dim}M = g< \infty$ with $g$ an integer. First we need to show that $\textrm{sup}\{-\inf\R\Hom(M,X) \ | \ X\in{\AB}  \}\leqslant g$.  By (3) and Lemma \ref{prop:2.3}(1), there exists a $dg$-projective complex $P$ such that $P\simeq M$ and $\textrm{C}_{n}(P)\in{\GA}$ for all $n\geqslant g$. For each $i\geqslant1$ and every $X\in{\AB}$, we get that

\vspace{2mm}
\hspace{23.5mm}$\h_{-g-i}(\R \Hom(M,X)) = \h_{-g-i}(\Hom(P,X))$

\vspace{1mm}
\hspace{64mm}$=\h_{-1}(\Hom(\Sigma^{-g-i+1}P_{\geqslant g+i-1},X))$

\vspace{1mm}
\hspace{64mm}$=\h_{-1}(\R\Hom(\textrm{C}_{g+i-1}(P),X))$

\vspace{1mm}
\hspace{64mm}$=\Ext^{1}(\textrm{C}_{g+i-1}(P),X)$

\vspace{1mm}
\hspace{64mm}$=0$.

\vspace{2mm}
\noindent This implies that $-\inf\R\Hom$ $(M,X)$ $\leqslant g$ for all $X\in{\AB}$, as desired.

Next we need to show that $\textrm{sup}\{-\inf\R\Hom(M,X) \ | \ \textrm{X}\in{\AB}  \} \geqslant g$. Suppose on the contrary that
$\textrm{sup}\{-\inf\R\Hom(M,X) \ | \ \textrm{X}\in{\AB} \} < g$. Since $\GAT\textrm{-dim}M = g$, there exists a complete $\A$ resolution $T\s{\tau}\ra \mathcal{QR}M\ra \mathcal{R}M\leftarrow M$ with each $T_i\in{\AB}$ such that $\textrm{C}_g(T)=\textrm{C}_g(\mathcal{QR}M)$ and $\tau_{i}$ $=$ id$_{T_{i}}$ for all $i\geqslant g$. Let $\lambda:(\mathcal{QR}M)_{g}\ra \textrm{C}_{g}(\mathcal{QR}M)$ be the natural map. Then there exist a monomorphism $l:\textrm{C}_{g}(\mathcal{QR}M)\ra T_{g-1}$ such that $\partial_{g}^{T} =l \lambda $ and a morphism $\mu:\textrm{C}_{g}(\mathcal{QR}M)\ra (\mathcal{QR}M)_{g-1}$ such that $\partial_{g}^{\mathcal{QR}M} =\mu \lambda $. By assumption, we get $\h_{-g}(\R\Hom(M,T_{g-1}))=0$. Thus $\h_{-g}(\Hom(\mathcal{QR}M,T_{g-1}))=0$ by Lemma \ref{lem:3.4}, and so we have the following exact sequence
$$\Hom((\mathcal{QR}M)_{g-1},T_{g-1})\ra\Hom((\mathcal{QR}M)_{g},T_{g-1})\ra\Hom((\mathcal{QR}M)_{g+1},T_{g-1}).$$
One easily checks that $\Hom(\mu,T_{g-1}):\Hom((\mathcal{QR}M)_{g-1},T_{g-1})\ra \Hom(\textrm{C}_{g}(\mathcal{QR}M), T_{g-1})$ is an epimorphism. Then there exists a map $\beta:(\mathcal{QR}M)_{g-1}\ra T_{g-1}$ such that $l=\beta \mu$. Since $l$ is monic, so is $\mu$. Note that $l: \textrm{C}_{g}(T)\ra T_{g-1} $ is a monic $\AB$-preenvelope. It follows that $\mu$ is a monic $\AB$-preenvelope. Thus $\textrm{C}_{g-1}(\mathcal{QR}M)\in{^{\perp_{1}}}(\AB)$, and so $\textrm{C}_{g-1}(A)\in{\GA}$ by Lemma \ref{prop:2.3}(2). This is a contradiction.
\end{proof}

By \cite[Definition 3.1]{YD}, the $\mathcal{A}$ dimension of $M$, denoted by $\mathcal{A}$-dim$M$, is defined as $\mathcal{A}$-dim$M$ =
$\inf$\{sup\{$i$ $|$ $A_{-i}\neq0$\} $|$ $M\simeq A$ with $A\in{dg\widetilde{\mathcal{A}}}$\}.
\begin{cor}\label{cor:4.3} Let $R$ be a ring and $M$ a complex. Then there is an inequality
{\rm\begin{center}{$\GAT\textrm{-dim}M\leqslant \mathcal{A}\textrm{-dim}M,$}
\end{center}}
\noindent and the equality holds if {\rm$\mathcal{A}\textrm{-dim}M<\infty$}.
\end{cor}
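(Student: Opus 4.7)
The plan is to separate the corollary into its two inequalities and attack them differently, both times routing through Theorem~\ref{thm:3.9}. First I would prove the unconditional inequality $\mathcal{GB}\text{-}\textrm{dim}M\leqslant\mathcal{B}\text{-}\textrm{dim}M$, and then, assuming $\mathcal{B}\text{-}\textrm{dim}M<\infty$, establish the reverse inequality by a contradiction that transfers an Ext-witness from $\mathcal{A}$ into $\mathcal{A}\cap\mathcal{B}$.

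For the inequality, the cases $\mathcal{B}\text{-}\textrm{dim}M\in\{-\infty,\infty\}$ are trivial, so I would set $n=\mathcal{B}\text{-}\textrm{dim}M$ to be a non-negative integer and choose $B\in dg\widetilde{\mathcal{B}}$ with $M\simeq B$ and $B_{-i}=0$ for all $i>n$. Then $-\inf M\leqslant n$ and $\textrm{Z}_{-n}(B)=B_{-n}\in\mathcal{B}\subseteq\mathcal{G}(\mathcal{B})$. Picking an arbitrary $dg$-injective $I$ with $M\simeq I$ (which exists by \cite[1.4.I, p.133]{AFHd}), Lemma~\ref{prop:3.5} applied to the pair $(B,I)$ in degree $-n$ gives
\[
\textrm{Ext}^{1}(A,\textrm{Z}_{-n}(I))\cong\textrm{Ext}^{1}(A,B_{-n})=0 \qquad \text{for every } A\in\mathcal{A},
\]
so $\textrm{Z}_{-n}(I)\in\mathcal{A}^{\perp_{1}}=\mathcal{B}\subseteq\mathcal{G}(\mathcal{B})$. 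Since this holds for every such $I$, condition~(3) of Theorem~\ref{thm:3.9} is verified, yielding $\mathcal{GB}\text{-}\textrm{dim}M\leqslant n$.

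For the equality when $\mathcal{B}\text{-}\textrm{dim}M<\infty$, write $n=\mathcal{B}\text{-}\textrm{dim}M$ and $k=\mathcal{GB}\text{-}\textrm{dim}M$; we already have $k\leqslant n$, and I aim for a contradiction under the assumption $k<n$. By the Ext-characterisation of the $\mathcal{B}$-dimension proved in \cite{YD} (namely, $\mathcal{B}\text{-}\textrm{dim}M\leqslant m$ iff $\textrm{Ext}^{m+1}(A',M)=0$ for every $A'\in\mathcal{A}$, a consequence of Lemma~\ref{lem:3.4} and a truncation argument), there exists some $A\in\mathcal{A}$ with $\textrm{Ext}^{n}(A,M)\neq 0$, while $\textrm{Ext}^{n+1}(A',M)=0$ for every $A'\in\mathcal{A}$. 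Now take a special $\mathcal{B}$-preenvelope $0\to A\to B_{A}\to A'\to 0$ with $B_{A}\in\mathcal{B}$ and $A'\in\mathcal{A}$; since $\mathcal{A}={}^{\perp_{1}}\mathcal{B}$ is automatically closed under extensions, $B_{A}\in\mathcal{A}\cap\mathcal{B}$. The long exact sequence
\[
\textrm{Ext}^{n}(B_{A},M)\longrightarrow\textrm{Ext}^{n}(A,M)\longrightarrow\textrm{Ext}^{n+1}(A',M)=0
\]
forces $\textrm{Ext}^{n}(B_{A},M)\neq 0$. On the other hand, the ``Furthermore'' clause of Theorem~\ref{thm:3.9} gives $k=\sup\{-\inf\mathbf{R}\textrm{Hom}(X,M)\mid X\in\mathcal{A}\cap\mathcal{B}\}<n$, whence $\textrm{Ext}^{n}(X,M)=0$ for all $X\in\mathcal{A}\cap\mathcal{B}$; applying this with $X=B_{A}$ yields the desired contradiction, so $k=n$.

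The main obstacle I anticipate is precisely this lifting step in the equality part, namely converting a non-vanishing $\textrm{Ext}^{n}(-,M)$-witness in $\mathcal{A}$ into one inside $\mathcal{A}\cap\mathcal{B}$. The special $\mathcal{B}$-preenvelope supplied by the completeness of the cotorsion pair, together with the closure of $\mathcal{A}={}^{\perp_{1}}\mathcal{B}$ under extensions, resolves this cleanly by producing a short exact sequence along which the non-vanishing of $\textrm{Ext}^{n}(-,M)$ is transported to an object of $\mathcal{A}\cap\mathcal{B}$.
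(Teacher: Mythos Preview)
Your proof is correct, and for the equality half it takes a genuinely different route from the paper's. One small slip: $\mathcal{B}\text{-}\mathrm{dim}M$ need not be \emph{non-negative} for a complex, but nothing in your argument actually uses that, so just drop the word.

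For the inequality, the paper simply invokes Theorem~\ref{thm:3.9} together with \cite[Theorem~3.4]{YD}; your argument via Lemma~\ref{prop:3.5} and condition~(3) of Theorem~\ref{thm:3.9} is a valid unpacking of essentially the same idea. For the equality, the paper argues concretely on a $dg$-injective resolution $I$: starting from $\mathrm{Z}_{-m}(I)\in\mathcal{B}$ (via \cite[Theorem~3.4]{YD}) and $\mathrm{Z}_{-n}(I)\in\mathcal{GB}$ (via Theorem~\ref{thm:3.9}), it climbs from degree $-m$ up to $-n$ using a pullback diagram at each step to show each intermediate cycle lies in $\mathcal{B}$, exploiting that $\mathcal{GB}$-modules split the relevant extensions. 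Your argument instead works entirely at the level of Ext-characterisations: you produce a witness $A\in\mathcal{A}$ with $\mathrm{Ext}^{n}(A,M)\neq 0$, push it into $\mathcal{A}\cap\mathcal{B}$ via a special $\mathcal{B}$-preenvelope and the long exact sequence, and contradict the ``Furthermore'' clause of Theorem~\ref{thm:3.9}. Your approach is shorter and more conceptual, and it makes the role of the two Ext-formulas (over $\mathcal{A}$ for $\mathcal{B}\text{-}\mathrm{dim}$, over $\mathcal{A}\cap\mathcal{B}$ for $\mathcal{GB}\text{-}\mathrm{dim}$) transparent; the paper's approach is more self-contained in that it does not rely on the Ext-formula for $\mathcal{B}\text{-}\mathrm{dim}$ from \cite{YD} beyond the cycle criterion, and its pullback step is reused elsewhere (e.g.\ in the proof of $(2)\Rightarrow(5)$ in Theorem~\ref{cor:3.13}).
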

\begin{proof}Set $\mathcal{A}\textrm{-dim}M= m$ and $\GAT\textrm{-dim}M=n$. There is nothing to prove if $m=\infty$. We may assume that $m$ is finite. It follows from Proposition \ref{thm:3.9} and \cite[Theorem 3.3]{YD} that $n\leqslant m$.

Suppose $n<m$. Choose a $dg$-projective complex $P$ such that $P\simeq M$. Note that sup$M$ $\leqslant n$ and $\textrm{C}_n(P)\in{\GA}$ by Proposition \ref{thm:3.9}. Then there exists an exact sequence of $R$-modules
$$0\ra \textrm{C}_m(P)\ra P_{m-1}\ra \cdots\ra P_n\ra \textrm{C}_n(P)\ra0.$$
Applying \cite[Theorem 3.3]{YD} again, we get that $\textrm{C}_m(P)\in{\A}$. Let $L=\coker(\textrm{C}_m(P)\ra P_{m-1})$. Then $L\in{\GAT}$ by Proposition \ref{prop:2.3}(1). Since ($\A$, $\B$) is complete, there exists an exact sequence $0\rightarrow \textrm{C}_m(P)\rightarrow B\rightarrow A\rightarrow0$ with $B\in{\AB}$ and $A\in{\A}$. By pushout, we have the following commutative diagram with exact rows and columns:
$$\xymatrix@C=20pt@R=20pt{&0\ar[d]&0\ar[d]&&\\
0\ar[r]&\textrm{C}_m(P)\ar[r]\ar[d]&P_{m-1}\ar[r]\ar[d]&L
\ar[r]\ar@{=}[d]& 0\\
0\ar[r]&B\ar[d]\ar[r]&U\ar[d]\ar[r]&L
\ar[r]& 0\\
&A\ar@{=}[r]\ar[d]&A\ar[d]&&\\
&0&0.&&\\
}$$
Since $P_{m-1}$ and $A$ belong to $\A$, so is $U$. Note that $L\in{\GA}$ and $B\in{\AB}$, it follows that the middle row in the above diagram is split. So $L$ belongs to $\A$. We proceed in this manner to get that $\textrm{C}_n(P)$ belongs to $\A$. Now \cite[Theorem 3.3]{YD} implies $m\leqslant n$, which contradicts our assumption. So $m=n$, as desired.
\end{proof}

\begin{cor}\label{corollary:3.11}For every family of complexes $\{M_{i}\}_{i\in{I}}$ one has
{\rm $$\GAT\textrm{-dim}\bigoplus_{i\in{I}} M_{i}=\sup_{i\in{I}}\{\GAT\textrm{-dim}M_{i}\}.$$}
\end{cor}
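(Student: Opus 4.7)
Set $s := \sup_{i\in I}\{\mathcal{GB}\textrm{-dim}M_i\}$ and $t := \mathcal{GB}\textrm{-dim}\prod_{i\in I} M_i$. The approach is to translate Gorenstein $\B$ dimension into a condition on a single module via the equivalence $(1)\Leftrightarrow(3)$ of Theorem~\ref{thm:3.9}, and then to exploit the fact that this module-level condition behaves well with respect to products. The only auxiliary observation needed is that products of $dg$-injective complexes remain $dg$-injective, which is routine because injectives are closed under products and $\Hom(E,-)$ commutes with products for any exact complex $E$.

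For the inequality $t\leq s$, assume $s<\infty$ (otherwise there is nothing to prove) and pick a $dg$-injective resolution $M_i\simeq I_i$ for each $i$. Then $\prod_i M_i\simeq\prod_i I_i$ and $\prod_i I_i$ is $dg$-injective. For any integer $n\geq s$, Theorem~\ref{thm:3.9}(3) applied to each $M_i$ gives $-\inf M_i\leq n$ and $\textrm{Z}_{-n}(I_i)\in\GB$; consequently $-\inf\prod_i M_i=\sup_i(-\inf M_i)\leq n$ and
\[
\textrm{Z}_{-n}\Bigl(\prod_i I_i\Bigr)\;\cong\;\prod_i\textrm{Z}_{-n}(I_i)\;\in\;\GB
\]
since $\GB$ is closed under products by Proposition~\ref{prop:2.3}(1). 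A second application of Theorem~\ref{thm:3.9}(3), now to $\prod_i M_i$, yields $t\leq n$ for every $n\geq s$, hence $t\leq s$.

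For $s\leq t$, assume $t<\infty$, set $n:=t$, fix $j\in I$, and retain the resolutions $I_i$. Theorem~\ref{thm:3.9}(3) applied to $\prod_i M_i$ produces $\prod_i\textrm{Z}_{-n}(I_i)\in\GB$. Since the canonical projection $\prod_i\textrm{Z}_{-n}(I_i)\to\textrm{Z}_{-n}(I_j)$ splits (via the obvious coordinate inclusion), $\textrm{Z}_{-n}(I_j)$ is a direct summand of an object in $\GB$ and hence itself lies in $\GB$ by Proposition~\ref{prop:2.3}(1). Combined with $-\inf M_j\leq -\inf\prod_i M_i\leq n$, Theorem~\ref{thm:3.9}(3) then gives $\mathcal{GB}\textrm{-dim}M_j\leq n$, and taking the supremum over $j$ yields $s\leq t$. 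The cases where one side is $\infty$ follow formally: the first argument shows $s<\infty\Rightarrow t<\infty$ and the second shows $t<\infty\Rightarrow s<\infty$, so the two quantities are finite simultaneously and the equality extends to $\infty=\infty$. No step presents a genuine obstacle; the crux is simply that the module-level criterion in Theorem~\ref{thm:3.9}(3) is preserved both by forming products (using closure of $\GB$ under products) and by passing to factors (using closure of $\GB$ under direct summands).
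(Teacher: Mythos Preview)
Your argument is correct and is precisely the approach the paper has in mind: the paper's own proof is just the one-line reference ``The proof is similar to that of \cite[Corollary 3.5]{vg}'', and what you have written is the natural adaptation of Veliche's argument to the present setting, using the characterization in Theorem~\ref{thm:3.9} together with the closure of $\GB$ under products and summands from Proposition~\ref{prop:2.3}(1). One small point of phrasing: when you invoke Theorem~\ref{thm:3.9}(3) to conclude $t\le n$ (and likewise $\mathcal{GB}\textrm{-dim}M_j\le n$), you have only exhibited \emph{one} $dg$-injective resolution with the required cycle in $\GB$; strictly speaking this verifies condition~(4) (since $dg$-injective complexes are $dg$-$\B$), which is the ``there exists'' version, and then the equivalence $(4)\Leftrightarrow(1)$ gives the bound---but this is purely cosmetic.
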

\begin{proof} The proof is similar to that of \cite[Corollay 3.5]{vg}.
\end{proof}

\begin{fact}\label{fact:2.5}{\rm  Let $M$ and $N$ be two complexes. According to Lemma \ref{lem:3.3},
there are two cofibrant-fibrant resolutions
$\mathcal{QR}M \ra  \mathcal{R}M \leftarrow M $  and $\mathcal{QR}N \ra  \mathcal{R}N \leftarrow N$ of $M$ and $N$, respectively. A homomorphism $\beta\in{\Hom(\mathcal{QR}M,\mathcal{QR}N)}$ is \emph{bounded above}
  if $\beta_i=0$ for all $i\gg0$. The subset $\uHom(\mathcal{QR}M,\mathcal{QR}N)$ of $\Hom(\mathcal{QR}M,\mathcal{QR}N)$, consisting of all bounded above homomorphisms, is a subcomplex with components $$\uHom(\mathcal{QR}M,\mathcal{QR}N)_{n}=\{(\varphi_i)\in{\Hom(\mathcal{QR}M,\mathcal{QR}N)_n \ | \ \varphi_{i} \ \textrm{=} \ \textrm{0} \ \textrm{for} \ \textrm{all} \ i \gg 0}\}.$$
We set $$\wHom(\mathcal{QR}M,\mathcal{QR}N)=\Hom(\mathcal{QR}M,\mathcal{QR}N)/{\uHom(\mathcal{QR}M,\mathcal{QR}N)}.$$
By \cite[Definition 3.7]{HuDing}, the $n$th \emph{Tate-Vogel cohomology group}, denoted by $\wtE_{\AAB}^n(M,N)$, is defined as
$$\wtE_{\AAB}^n(M,N)=\textrm{H}_{-n}(\wHom(\mathcal{QR}M,\mathcal{QR}N)).$$}
\end{fact}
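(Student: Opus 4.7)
The plan is to verify the structural assertions implicit in this Fact, namely that $\dHom(\mathcal{RQ}M,\mathcal{RQ}N)$ is a subcomplex of $\Hom(\mathcal{RQ}M,\mathcal{RQ}N)$ so that the quotient defining $\lHom(\mathcal{RQ}M,\mathcal{RQ}N)$ is legitimate, and that the definition of $\wte^{n}_{\AAB}(M,N)$ is independent of the choice of fibrant-cofibrant resolutions. The equality $\wte^{n}_{\AAB}(M,N)=\h_{-n}(\lHom(\mathcal{RQ}M,\mathcal{RQ}N))$ is imported verbatim from \cite[Definition 3.7]{HuDing}, so there is nothing to prove about it per se; what needs checking is the compatibility of the $\Hom$-differential with the bounded-below filtration.

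First I would spell out the $\Hom$-differential recalled in Section \ref{pre}. For $\varphi\in \Hom(\mathcal{RQ}M,\mathcal{RQ}N)_{n}$, one has
$$\partial(\varphi)_{i}=\partial^{\mathcal{RQ}N}_{i+n}\,\varphi_{i}-(-1)^{n}\varphi_{i-1}\,\partial^{\mathcal{RQ}M}_{i}.$$
If $\varphi$ is bounded below, say $\varphi_{j}=0$ for all $j\leqslant k$, then for every $i\leqslant k$ both $\varphi_{i}$ and $\varphi_{i-1}$ vanish, so $\partial(\varphi)_{i}=0$ for all $i\leqslant k$. Hence $\partial(\varphi)$ is again bounded below (with the same bound $k$), showing that $\dHom(\mathcal{RQ}M,\mathcal{RQ}N)$ is closed under $\partial$ and is a subcomplex. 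The quotient $\lHom(\mathcal{RQ}M,\mathcal{RQ}N)$ is then a bona fide complex of abelian groups, and we obtain the short exact sequence
$$0\to \dHom(\mathcal{RQ}M,\mathcal{RQ}N)\to \Hom(\mathcal{RQ}M,\mathcal{RQ}N)\to \lHom(\mathcal{RQ}M,\mathcal{RQ}N)\to 0$$
which is precisely the sequence invoked immediately after Fact \ref{fact:2.5} in the introduction.

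Finally, to justify that $\wte^{n}_{\AAB}(M,N)$ does not depend on the choice of fibrant-cofibrant resolutions, I would appeal to Lemma \ref{lem:3.3}: since the induced cotorsion pairs $(\widetilde{\mathcal{A}},dg\widetilde{\mathcal{B}})$ and $(dg\widetilde{\mathcal{A}},\widetilde{\mathcal{B}})$ are complete and hereditary, any two fibrant-cofibrant resolutions of $M$ (resp.\ of $N$) are chain-homotopy equivalent in $\mathbb{K}(R)$. The induced chain map of $\Hom$-complexes carries bounded-below homomorphisms to bounded-below homomorphisms (composition on either side with a chain map of degree $0$ preserves the vanishing condition on low-degree components, and a chain homotopy differs from a chain map of degree one only by a degree shift that does not affect boundedness below). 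Thus the induced map on $\lHom$ is a quasi-isomorphism, and $\wte^{n}_{\AAB}(M,N)$ is canonically defined.

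The only potential obstacle is index bookkeeping in the differential formula, which is entirely routine; no deeper input is required beyond the standard formalism of the $\Hom$-complex and the completeness of the induced cotorsion pairs supplied by Lemma \ref{lem:3.3}.
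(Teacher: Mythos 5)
Your proposal is correct and amounts to exactly the routine verification underlying this Fact, which the paper does not prove but simply imports from \cite[Definition 3.7]{HuDing} (with independence of the choice of resolutions handled there, cf.\ the citation of \cite[Lemma 3.4]{HuDing} in Remark \ref{rem:4.1}). Your two checks — that the $\Hom$-differential $\partial(\varphi)_i=\partial^{\mathcal{RQ}N}_{i+n}\varphi_i-(-1)^n\varphi_{i-1}\partial^{\mathcal{RQ}M}_i$ preserves bounded-below homomorphisms, and that a chain homotopy equivalence between any two fibrant-cofibrant resolutions induces (by pre/post-composition, which preserves boundedness below) compatible homotopy equivalences on $\dHom$, $\Hom$ and $\lHom$ — are precisely the standard arguments behind that definition, so there is no substantive divergence from the paper.
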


\begin{rem} \label{rem:4.1}{\rm Let $M$ and $N$ be two complexes. According to Fact \ref{fact:2.5}, there exists an exact sequence of complexes $$0\ra\uHom(\mathcal{QR}M,\mathcal{QR}N)\ra \Hom(\mathcal{QR}M,\mathcal{QR}N) \ra \wHom(\mathcal{QR}M,\mathcal{QR}N)\ra 0.$$
It follows from Lemma \ref{lem:3.4} that $\Ext^{n}(M,N)=\h_{-n}(\Hom(\mathcal{QR}M,\mathcal{QR}N))$ for each integer $n$. Thus we have a long exact sequence
$$\xymatrix@C=0.93cm{
  \cdots \ar[r]& \Yxt_{\AAB}^n(M,N) \ar[r]^{\varepsilon_R^n(M,N)} & \Ext^n(M,N) \ar[r]^{\widetilde{\varepsilon}_R^n(M,N)} & \wtE_{\AAB}^n(M,N)\ar[r] &\Yxt_{\AAB}^{n+1}(M,N) \ar[r]&\cdots,}$$
where $\Yxt_{\AAB}^n(M,N)=\h_{-n}(\uHom(\mathcal{QR}M,\mathcal{QR}N))$ for each integer $n$.

By \cite[Lemma 3.3]{HuDing}, one can see that $\Yxt_{\AAB}^n(-,-)$ is a cohomological functor for each integer $n$, independent of the choice of cofibrant replacements and fibrant replacements.}
\end{rem}

We are now in a position to prove Theorem \ref{theorem:3.6}, which gives general techniques for computing cohomologies $\Yxt_{\AAB}^n(M,N)$ and $\wte_{\AAB}^n(M,N)$ whenever $M$ is a complex with finite Gorenstein $\A$ dimension.

\begin{para}\label{4.3} {\bf Proof of Theorem \ref{theorem:3.6}.}   By Proposition \ref{thm:3.9}, there exists a split complete $\A$ resolution $T\s{\tau}\ra\mathcal{QR}M\ra \mathcal{R}M\leftarrow M$ of $M$ such that each $\tau_{i}$ is a split epimorphism and $\tau_{i}$ $=$ id$_{T_{i}}$ for all $i\gg0$. Let $L_M =\ker(\tau)$. Then the exact sequence $0\ra L_M\ra T\ra \mathcal{QR}M\ra 0$ of complexes is split in  each degree. Let $\mathcal{QR}N\ra \mathcal{R}N\leftarrow N$ be a cofibrant-fibrant resolution of $N$. Applying the functor $\Hom(-,\mathcal{QR}N)$
 to the  exact sequence above, we have the following commutative diagram with exact rows and columns:
$$\xymatrix@C=15pt@R=15pt{
  &0 \ar[d]& 0 \ar[d] & 0 \ar[d] & \\
  0\ar[r]& \uHom(\mathcal{QR}M,\mathcal{QR}N) \ar[r] \ar[d] & \Hom(\mathcal{QR}M,\mathcal{QR}N)  \ar[r] \ar[d] & \wHom(\mathcal{QR}M,\mathcal{QR}N)  \ar[d] \ar[r] &0\\
  0\ar[r]& \uHom(T,\mathcal{QR}N) \ar[r] \ar[d] & \Hom(T,\mathcal{QR}N)  \ar[r] \ar[d] & \wHom(T,\mathcal{QR}N)  \ar[d] \ar[r] &0\\
  0\ar[r]& \uHom(L_M,\mathcal{QR}N) \ar[r] \ar[d] & \Hom(L_M,\mathcal{QR}N)  \ar[r] \ar[d] & \wHom(L_M,\mathcal{QR}N)  \ar[d] \ar[r] &0\\
  &0 & 0  &\ 0.  &}$$
Since $L_M\in{\C^{+}(R)}$, $\uHom(L_M,\mathcal{QR}N)=\Hom(L_M,\mathcal{QR}N)$. For each integer $n\in{\mathbb{Z}}$, we have $\h_n(\wHom(\mathcal{QR}M,\mathcal{QR}N))\cong\h_n(\wHom(T,\mathcal{QR}N))$. Note that $\mathcal{QR}N\in{dg\AT \cap dg\BT}$ and $T$ is an exact complex such that $\Hom(T,X)$ is exact for all $X\in{\AB}$. Then $\uHom(T,\mathcal{QR}N)$ is exact. Thus $\h_n(\Hom(T,\mathcal{QR}N))\cong\h_n(\wHom(T,\mathcal{QR}N))$ for each integer $n\in{\mathbb{Z}}$, and hence $$\h_n(\wHom(\mathcal{QR}M,\mathcal{QR}N))\cong\h_n(\wHom(T,\mathcal{QR}N))\cong\h_n(\Hom(T,\mathcal{QR}N))$$ for each integer $n\in{\mathbb{Z}}$.
So $\wtE_{\AAB}^n(M,N)\cong \h_{-n}(\Hom(T,\mathcal{QR}N))$ for each integer $n\in{\mathbb{Z}}$.

By the exactness of the left column in the above diagram, we have the following exact sequence
\begin{center}{$\xymatrix@C=30pt@R=30pt{\cdots\ar[r]& \h_{n}(\uHom(\mathcal{QR}M,\mathcal{QR}N))\ar[r]& \h_{n}(\uHom(T,\mathcal{QR}N))
\ar[r]& \h_{n}(\uHom(L_M,\mathcal{QR}N))}$

\hspace{-23mm}$\xymatrix@C=30pt@R=30pt{ \ar[r]&\h_{n-1}(\uHom(\mathcal{QR}M,\mathcal{QR}N))\ar[r]& \h_{n-1}(\uHom(T,\mathcal{QR}N))
\ar[r]&\cdots}$.}
\end{center}
Since $\uHom(T,\mathcal{QR}N)$ is exact by the proof above, we get that $$\h_{n-1}(\uHom(\mathcal{QR}M,\mathcal{QR}N))\cong\h_{n}(\uHom(L_M,\mathcal{QR}N))$$ for all integers $n\in{\mathbb{Z}}$. So
$\Yxt_{\AAB}^{n+1}(M,N)\cong\h_{-n}(\Hom(L_M,\mathcal{QR}N))$ for all integers $n\in{\mathbb{Z}}$.

Now assume that $N$ is a bounded below $dg$-$\mathcal{B}$ complex. Note that there is an exact sequence
$0\ra K_N\ra \mathcal{Q}N \ra N\ra 0$ of complexes with
$K_N\in{\BT\cap{\C}^{-}(R)}$ and $\mathcal{Q}N\in{dg\AT \cap {\C}^{-}(R)}$ by \cite[Lemma 3.11]{HuDing}.
Hence both $\Hom(T,K_N)$ and $\Hom(L_M,K_N)$ are exact by \cite[Lemma 2.4]{cfh}.
It is easy to check that the sequences $$0\ra \Hom(T,K_N)\ra  \Hom(T,\mathcal{Q}N)\ra \Hom(T,N)\ra 0$$ and
$$0\ra \Hom(L_M,K_N)\ra  \Hom(L_M,\mathcal{Q}N)\ra \Hom(L_M,N)\ra 0$$ are exact. Thus for any integer $n$, we have $$\h_{n}(\Hom(T,\mathcal{Q}N))\cong \h_n (\Hom(T,N)),\ \h_{n}(\Hom(L_M,\mathcal{Q}N))\cong \h_n (\Hom(L_M,N)).$$ So $\wtE_{\AAB}^n(M,N)\cong \h_{-n}(\Hom(T,N)) \ \textrm{and} \ \Yxt_{\AAB}^{n+1}(M,N)\cong\h_{-n}(\Hom(L_M,N))$ for all integers $n\in{\mathbb{Z}}$. This completes the proof.\hfill$\Box$
\end{para}

\begin{cor}\label{cor:5.5} If $M$ is a complex of finite Gorenstein $\A$ dimension, then we have
{\rm $$\GAT\textrm{-dim}M={\rm sup}\{n\in{\mathbb{Z}} \ | \
\Yxt_{\AAB}^{n}(M,N)\neq 0  \ \textrm{for} \ \textrm{some} \ N\in{\AB} \}.$$}
\end{cor}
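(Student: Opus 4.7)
The plan is to reduce the statement to the formula already established in the last clause of Theorem~\ref{thm:3.9}, namely
\[
\mathcal{GB}\textrm{-dim}M=\sup\{-\inf\R\Hom(X,M)\mid X\in\AB\},
\]
and then convert the invariant $-\inf\R\Hom(X,M)$ into the supremum of those integers $n$ for which $\Yxt_{\AAB}^{n}(X,M)\neq 0$. Since $\Ext^{n}(X,M)=\h_{-n}(\R\Hom(X,M))$, the identity
\[
-\inf\R\Hom(X,M)=\sup\{n\in\mathbb{Z}\mid \Ext^{n}(X,M)\neq 0\}
\]
is immediate, so the real task is to identify $\Ext^{n}(X,M)$ with $\Yxt_{\AAB}^{n}(X,M)$ whenever $X\in\AB$.

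To carry this out, I would choose a split Tate $\B$ resolution $M\leftarrow \mathcal{Q}M\to\mathcal{RQ}M\xrightarrow{\tau}T$ as in Theorem~\ref{thm:3.9}(7), and invoke Theorem~\ref{theorem:3.6}, which says that for the stalk complex $X\in\AB$ (which is certainly a left bounded $dg$-$\A$ complex) one has
\[
\wte_{\AAB}^{n}(X,M)\cong \h_{-n}(\Hom(X,T))
\]
for every $n\in\mathbb{Z}$. The key observation is that $\Hom(X,T)$ is exact: writing $T$ as the splicing of the short exact sequences $0\to \textrm{Z}_{i+1}(T)\to T_{i}\to \textrm{Z}_{i}(T)\to 0$, each such sequence is $\Hom(\AB,-)$ exact because the cycle modules $\textrm{Z}_{i}(T)$ are Gorenstein $\B$-modules and therefore lie in $(\AB)^{\perp}$; assembling these sequences yields exactness of $\Hom(X,T)$. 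Hence $\wte_{\AAB}^{n}(X,M)=0$ for every $X\in\AB$ and every $n$.

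Plugging this vanishing into the long exact sequence of Remark~\ref{rem:4.1} gives isomorphisms $\Yxt_{\AAB}^{n}(X,M)\cong\Ext^{n}(X,M)$ for all $n\in\mathbb{Z}$ and all $X\in\AB$. Taking suprema over $n$ and over $X\in\AB$, and applying the last clause of Theorem~\ref{thm:3.9}, delivers
\[
\mathcal{GB}\textrm{-dim}M=\sup\{n\in\mathbb{Z}\mid \Yxt_{\AAB}^{n}(X,M)\neq 0\text{ for some }X\in\AB\},
\]
as desired. The degenerate case $\mathcal{GB}\textrm{-dim}M=-\infty$ corresponds to $M$ being exact, and there $\R\Hom(X,M)=0$ forces both sides to equal $-\infty$, so no separate argument is needed. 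The only mildly delicate point is verifying exactness of $\Hom(X,T)$ from the definitional properties of $\GB$; once that is in place the rest is a direct assembly of earlier results.
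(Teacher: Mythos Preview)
Your proposal is correct and follows essentially the same route as the paper's proof: invoke Theorem~\ref{theorem:3.6} to see that $\wte_{\AAB}^{n}(X,M)=0$ for $X\in\AB$, use the long exact sequence of Remark~\ref{rem:4.1} to obtain $\Yxt_{\AAB}^{n}(X,M)\cong\Ext^{n}(X,M)$, and then apply the last clause of Theorem~\ref{thm:3.9}. The only difference is that you spell out explicitly why $\Hom(X,T)$ is exact, whereas the paper leaves this implicit in the citation of Theorem~\ref{theorem:3.6}.
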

\begin{proof} Let $N\in{\AB}$. Then $\wtE_{\AAB}^n(M,N)=0$ for all $n\in{\mathbb{Z}}$ by Theorem \ref{theorem:3.6}. It follows from Fact \ref{fact:2.5}(2) that $\Yxt_{\AAB}^{n}(M,N)\cong\textrm{Ext}^{n}_{R}(M,N)$ for all
$n\in{\mathbb{Z}}$ and all $N\in{\AB}$. So the result is clear by Proposition \ref{thm:3.9}.
\end{proof}

\section{\bf Relative cohomology groups for complexes with finite Gorenstein $\A$ dimension}\label{Relative-homology}

The goal of this section is to study relative cohomology groups for complexes with finite
Gorenstein $\A$ dimension and prove Theorem \ref{thm:1.4'}. To this end, we start with the following easy observations.

\begin{lem}\label{lem:5.1} Let $f:M\ra B$ be a special $\B$ preenvelope of an $R$-module $M$. Then
{\rm$\mathcal{A}\textrm{-dim}M=\mathcal{A}\textrm{-dim}B$} and {\rm $\GAT\textrm{-dim}M=\GAT\textrm{-dim}B$}.
\end{lem}
\begin{proof} The proof is straightforward by noting that $0\ra M\ra B\ra A\ra 0$ is an exact sequence of $R$-modules with $A\in{\A}$.
\end{proof}

\begin{lem}\label{prop:5.2}Let $M$ be an $R$-module with finite Gorenstein $\A$ dimension and $f:M\ra B$ a special $\B$ preenvelope of $M$. Then $B$ has a split complete $\A$ resolution $T\s{\tau}\rightarrow \mathcal{Q}B\rightarrow B\s{{\rm id}}\leftarrow B$. Hence there exists a degreewise split exact sequence of complexes $$0\ra \Sigma^{-1}X\ra \widetilde{T}\ra \mathcal{Q}B\ra0$$
with $\widetilde{T}=(T_{\geqslant0})^{+}$ such that $X$ is a proper $\GA$-resolution of $B$.
\end{lem}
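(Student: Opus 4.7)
The plan is to produce a split Tate $\B$ resolution of $A$ itself and then extract $X$ as a cokernel inside the displayed sequence. From the fact that $f$ is a special $\A$-precover we obtain an exact sequence $0\ra B\ra A\ra M\ra 0$ with $B=\ker f\in{\B}$ and $A\in{\A}$, so Lemma \ref{lem:5.1} gives $\mathcal{GB}$-$\textrm{dim}A=\mathcal{GB}$-$\textrm{dim}M<\infty$; set $n=\mathcal{GB}$-$\textrm{dim}A$. Since $A\in{\A}$, its stalk complex in degree $0$ is a $dg$-$\A$ complex (hence cofibrant), so we can take $\mathcal{Q}A=A$ with $\textrm{id}_A$ as the cofibrant replacement. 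Theorem \ref{thm:3.9}, implication $(1)\Rightarrow(7)$, then furnishes the required split Tate $\B$ resolution $A\s{\textrm{id}}\leftarrow A\ra \mathcal{R}A\s{\tau}\ra T$ with each $\tau_i$ a split monomorphism and $\tau_i=\textrm{id}_{(\mathcal{R}A)_i}$ for all $i\leqslant -n$. Because the cotorsion pair $(\A,\B)$ is hereditary (so $\A$ is closed under extensions), an iterated special $\B$-preenvelope construction starting from $A$ lets us arrange $\mathcal{R}A$ to be concentrated in degrees $\leqslant 0$ with every entry $(\mathcal{R}A)_i$ in $\A\cap\B$.

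Next I would form $\widetilde{T}=(T_{\leqslant 0})^+$ of Definition \ref{df:2.2}: the exact complex having $Z_0(T)$ in degree $1$, $T_i$ in degree $i$ for $i\leqslant 0$, and the natural differentials. Because $(\mathcal{R}A)_i=0$ for $i\geqslant 1$, the map $\tau$ extends canonically to a chain map $\bar{\tau}:\mathcal{R}A\ra \widetilde{T}$ (equal to $\tau_i$ in degree $i\leqslant 0$ and zero in degree $1$) that remains a split monomorphism in every degree. Setting $\Sigma X:=\coker(\bar{\tau})$ yields the desired degreewise split short exact sequence $0\ra\mathcal{R}A\ra\widetilde{T}\ra\Sigma X\ra 0$. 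The associated complex $X$ is supported in degrees $\leqslant 0$ with $X_0=Z_0(T)\in\mathcal{GB}$, while for $i\geqslant 1$ the entry $X_{-i}=\coker(\tau_{-i+1})$ is a direct summand of $T_{-i+1}\in\A\cap\B$ and therefore also lies in $\A\cap\B$. The long exact homology sequence of the SES, combined with exactness of $\widetilde{T}$ and $\h_0(\mathcal{R}A)=A$, delivers $\h_0(X)=A$ and vanishing elsewhere, so $X$ is a $\mathcal{GB}$-coresolution of $A$ of length at most $n$; concretely $X^+: 0\ra A\ra X_0\ra X_{-1}\ra\cdots\ra X_{-n}\ra 0$.

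The final step, and the main obstacle, is to verify properness: $\Hom(X^+,G)$ must be exact for every $G\in{\mathcal{GB}}$. The key observation is that only the top entry $X_0$ fails to lie in $\A\cap\B$, whereas $G\in{\mathcal{GB}}\subseteq(\A\cap\B)^{\perp}$ by the very definition of Gorenstein $\B$-module, so $\Ext^{\geqslant 1}(X_{-i},G)=0$ for every $i\geqslant 1$. Letting $T_0=A$ and defining $T_k$ for $k=1,\ldots,n$ to be the $k$-th truncated cokernel in $X^+$ (so $T_n=X_{-n}$), we break $X^+$ into short exact sequences $0\ra T_{k-1}\ra X_{-k+1}\ra T_k\ra 0$ for $k=1,\ldots,n$. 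For $k\geqslant 2$ the middle term lies in $\A\cap\B$, so dimension shifting along these sequences yields $\Ext^1(T_k,G)\cong\Ext^{n-k+1}(T_n,G)=\Ext^{n-k+1}(X_{-n},G)=0$ for $k=1,\ldots,n$. Consequently each of the short exact sequences remains exact after applying $\Hom(-,G)$, and concatenating them gives exactness of $\Hom(X^+,G)$, completing the proof.
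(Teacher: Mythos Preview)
Your argument is correct and follows essentially the same approach as the paper: obtain the split Tate $\B$ resolution from Theorem \ref{thm:3.9} and Lemma \ref{lem:5.1}, form $\widetilde{T}=(T_{\leqslant 0})^+$, take the cokernel of the induced map as $\Sigma X$, identify the entries of $X$, and verify properness by dimension shifting along the short exact sequences using $X_{-i}\in\A\cap\B$ for $i\geqslant 1$ and $G\in(\A\cap\B)^\perp$. Your treatment is in fact slightly more careful than the paper's in one respect: you explicitly arrange $\mathcal{R}A$ to be concentrated in nonpositive degrees (via iterated special $\B$-preenvelopes of $A\in\A$), which is needed for the extension $\bar\tau$ to be a chain map, whereas the paper leaves this implicit.
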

\begin{proof}By Proposition \ref{thm:3.9} and Lemma \ref{lem:5.1}, $B$ has a split complete $\A$ resolution $T\s{\tau}\rightarrow \mathcal{Q}B\rightarrow B\s{{\rm id}}\leftarrow B$. Thus there is a nonnegative integer $n$ such that $\tau_{i}$ is bijective for all $i\geqslant n$.
We set $\widetilde{T}=(T_{\geqslant0})^{+}$, that is
 \begin{center}{$\widetilde{T}_i=\left\{\begin{array}{llll}
T_{i}  &  \mbox{if $i\geqslant 0$;}\\
\textrm{C}_{0}(T)  &  \mbox{if $i =-1$;}\\
0  & \mbox{if $i<-1$,}
\end{array}
\right.$ \quad and \quad $\partial^{\widetilde{T}}_i=\left\{\begin{array}{llll}
\partial^{T}_{i}  &  \mbox{if $i>0$;}\\
\pi  &  \mbox{if $i =0$;}\\
0  & \mbox{if $i<0$,}
\end{array}
\right.$}
\end{center}
\noindent where $\pi:T_{0}\ra \textrm{C}_{0}(T)$ is the natural map. Let $\widetilde{\beta}:\widetilde{T}\ra \mathcal{Q}B$ be a morphism such that $\widetilde{\beta}_{i}=\tau_i$ for all $i\geqslant 0$ and $\widetilde{\beta}_{i}=0$ for all $i<0$. Let $X=\Sigma\ker(\widetilde{\beta})$. Note that $\ker(\tau)$ is a complex with each entry in $\mathcal{A}\cap\mathcal{B}$. Thus
$X_0=\textrm{C}_{0}(T)\in{{\GA}}$, $X_i\in{\mathcal{A}\cap\mathcal{B}}$ for $1\leqslant i\leqslant n-1$, and $X_i=0$ for $i\geqslant{n+1}$ and $i\leqslant{-1}$. Hence $X$ is a proper $\GA$-resolution of $B$. So we have the following exact sequence of complexes:
$$0\ra \Sigma^{-1}X\ra \widetilde{T}\ra \mathcal{Q}B\ra0$$
with $\widetilde{T}=(T_{\geqslant0})^{+}$.
\end{proof}

The following is the key result to prove Theorem \ref{thm:1.4'}.

\begin{prop}\label{thm:1.4} Let $M$ be an $R$-module in $\mathcal{B}$ with finite Gorenstein $\A$ dimension. For any $R$-module $N$ in $\B$, we have the following isomorphisms:
\begin{enumerate}
\item {\rm $\oext^{1}_{\GA}(M,N)\cong\ker(\widetilde{\varepsilon}_R^1(M,N))$;
\item {\rm $\oext^{n}_{\GA}(M,N)\cong\Yxt_{\AAB}^{n}(M,N)$}} for any integer $n>1$.
\end{enumerate}
\end{prop}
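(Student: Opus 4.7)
The plan is to apply $\Hom_R(M,-)$ to the degreewise split short exact sequence furnished by Lemma \ref{prop:5.2}, extract a long exact sequence in cohomology, identify each term, and then compare the resulting sequence with the canonical long exact sequence of Remark \ref{rem:4.1}. Since $A\in \A$, the identity map $\mathrm{id}_A$ is a special $\A$ precover of $A$, so Lemma \ref{prop:5.2} supplies a split Tate $\B$ resolution $A\stackrel{\mathrm{id}}{\leftarrow} A\rightarrow \mathcal{R}A\stackrel{\tau}{\rightarrow} T$ together with a degreewise split exact sequence of complexes
$$0\rightarrow \mathcal{R}A\stackrel{\widetilde{\beta}}{\rightarrow} \widetilde{T}\rightarrow \Sigma X\rightarrow 0,\qquad \widetilde{T}=(T_{\leqslant 0})^{+},$$
where $X$ is a proper $\GB$-coresolution of $A$. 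Degreewise splitness is preserved by $\Hom_R(M,-)$, so a short exact sequence of complexes and hence a long exact sequence in cohomology results.

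The next step is to identify each cohomology group. Since $M\in \A$ is a $dg$-$\A$ complex as a stalk and $\mathcal{R}A\in dg\widetilde{\B}$, Lemma \ref{lem:3.4} gives $\h_{-n}(\Hom(M,\mathcal{R}A))\cong \Ext^n(M,A)$ for all $n$. Definition \ref{df:2.0}, applied to the proper $\GB$-coresolution $X$, yields $\h_{-n}(\Hom(M,\Sigma X))=\h_{-n-1}(\Hom(M,X))=\oext^{n+1}_{\GB}(M,A)$. For the middle complex, I will exploit that $\widetilde{T}$ agrees with $T$ in degrees $\leqslant 0$, while $\widetilde{T}_1=\textrm{Z}_0(T)$ maps into $\widetilde{T}_0=T_0$ via the canonical inclusion $q$. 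A direct computation (the image of $q_*$ already exhausts the kernel of $\Hom(M,\partial_0^T)$) gives $\h_0(\Hom(M,\widetilde{T}))=0$; and for $n\geqslant 1$ only degrees $\leqslant 0$ of $\widetilde{T}$ enter into $\h_{-n}$, so $\h_{-n}(\Hom(M,\widetilde{T}))=\h_{-n}(\Hom(M,T))\cong \wte^n_{\AAB}(M,A)$ by Theorem \ref{theorem:3.6}.

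Substituting, the long exact sequence will read
$$0\rightarrow \oext^1_{\GB}(M,A)\rightarrow \Ext^1(M,A)\stackrel{\delta^1}{\rightarrow} \wte^1_{\AAB}(M,A)\rightarrow \oext^2_{\GB}(M,A)\rightarrow \Ext^2(M,A)\stackrel{\delta^2}{\rightarrow}\cdots.$$
For (1), I only need the initial four-term segment and the identification $\delta^1=\widetilde{\varepsilon}_R^1(M,A)$; then (1) reads off as $\oext^1_{\GB}(M,A)\cong \ker(\widetilde{\varepsilon}_R^1(M,A))$. For (2), I plan to construct a morphism of short exact sequences $(\mathrm{id},\psi,\bar{\psi})$ from $0\rightarrow \mathcal{R}A\rightarrow T\rightarrow L_A\rightarrow 0$ (the sequence underlying Theorem \ref{theorem:3.6}) to $0\rightarrow \mathcal{R}A\rightarrow \widetilde{T}\rightarrow \Sigma X\rightarrow 0$, where $\psi\colon T\rightarrow \widetilde{T}$ is the identity in degrees $\leqslant 0$, the surjection $T_1\twoheadrightarrow \textrm{Z}_0(T)$ in degree $1$, and zero in higher degrees. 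Applying $\Hom_R(M,-)$ produces a morphism of long exact sequences; for $n\geqslant 1$ the map $\h_{-n}(\Hom(M,T))\rightarrow \h_{-n}(\Hom(M,\widetilde{T}))$ induced by $\psi$ is the identity after the natural identification with $\wte^n_{\AAB}(M,A)$, and the five lemma then delivers the desired isomorphism $\Yxt^{n+1}_{\AAB}(M,A)\cong \oext^{n+1}_{\GB}(M,A)$ for $n\geqslant 1$.

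The hard part will be making $\psi$ into a genuine morphism of short exact sequences, i.e.\ checking $\psi\circ \tau=\widetilde{\beta}$. In degrees $\leqslant 0$ both composites equal $\tau$, so the only non-trivial degree is $1$, where $\widetilde{\beta}_1=0$ while $(\psi\circ\tau)_1$ is the composite $\mathcal{R}A_1\stackrel{\tau_1}{\rightarrow} T_1\twoheadrightarrow \textrm{Z}_0(T)$. This composite need not vanish a priori, but it does if $\mathcal{R}A_1=0$, which can be arranged by choosing the fibrant replacement of the stalk $A$ with support in non-positive degrees (built inductively by iterated special $\widetilde{\B}$ preenvelopes beginning from $A$). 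Once $\psi\circ\tau=\widetilde{\beta}$ is available, the identification $\delta^n=\widetilde{\varepsilon}_R^n(M,A)$ needed for (1) follows from the same diagram, since both maps are realised by $\h_{-n}(\Hom(M,\tau))$ after tracing through the identifications in the proof of Theorem \ref{theorem:3.6}.
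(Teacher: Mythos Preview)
Your treatment of (1) is essentially the paper's argument: apply $\Hom(M,-)$ to the degreewise split sequence from Lemma~\ref{prop:5.2}, use $\h_0(\Hom(M,\widetilde{T}))=0$, and identify the resulting map $\Ext^1(M,A)\to\wte^1_{\AAB}(M,A)$ with $\widetilde{\varepsilon}^1_R(M,A)$ via Theorem~\ref{theorem:3.6}. Your justification that this map is really $\widetilde{\varepsilon}^1_R(M,A)$ (tracing through the $3\times 3$ diagram in the proof of Theorem~\ref{theorem:3.6}) is more explicit than the paper's, which simply asserts it.

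For (2) your route works, but the paper's is considerably shorter and avoids the technical wrinkle you flag. The paper does not build a morphism $\psi\colon T\to\widetilde{T}$ of short exact sequences at all; instead it appeals directly to the formula $\Yxt^n_{\AAB}(M,A)\cong \h_{1-n}(\Hom(M,L_A))$ from Theorem~\ref{theorem:3.6} and then observes that, by the explicit construction in Lemma~\ref{prop:5.2}, the complexes $L_A=\coker(\tau)$ and $\Sigma X=\coker(\widetilde{\beta})$ coincide in degrees $\leqslant 0$ (since $\widetilde{T}_i=T_i$ and $\widetilde{\beta}_i=\tau_i$ there). For $n>1$ one has $1-n\leqslant -1$, so $\h_{1-n}(\Hom(M,L_A))=\h_{1-n}(\Hom(M,\Sigma X))=\h_{-n}(\Hom(M,X))=\oext^n_{\GB}(M,A)$. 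This bypasses entirely the need to arrange $(\mathcal{R}A)_1=0$ (equivalently $\partial^{\mathcal{R}A}_1=0$), the construction of $\psi$, and the five lemma. Your approach has the merit of producing a map of long exact sequences, which would be useful if one wanted naturality statements beyond the bare isomorphisms, but for the proposition as stated the paper's degree-by-degree identification is the cleaner path.
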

\begin{proof}Note that $M$ is an $R$-module in $\B$ with finite Gorenstein $\A$ dimension. Then $M$ has a complete $\A$ resolution $T\s{\tau}\rightarrow \mathcal{Q}M\rightarrow M\s{{\rm id}}\leftarrow M$ by Proposition \ref{thm:3.9}. It follows from Lemma \ref{prop:5.2} that there exists a
degreewise split exact sequence of complexes $$\mathbb{L}:0\ra \Sigma^{-1}X\ra \widetilde{T}\ra \mathcal{Q}M\ra0$$
with $\widetilde{T}=(T_{\geqslant0})^{+}$ such that $X$ is a proper $\GA$-resolution of $M$.

(1) Let $N$ be an $R$-module in $\B$. Applying $\Hom(-,N)$ to the exact sequence $\mathbb{L}$ gives rise to the following exact sequence of complexes:
$$0\ra\Hom(\mathcal{Q}M,N)\ra\Hom(\widetilde{T},N)\ra \Hom( \Sigma^{-1}X,N)\ra0.$$
By the long exact sequence theorem, we have the following exact sequence:
$$\xymatrix@C=10pt@R=10pt{\h_0(\Hom(\widetilde{T},N)) \ar[r] & \h_0(\Hom(\Sigma^{-1}X,N)) \ar[r] &
  \h_{-1}(\Hom(\mathcal{Q}M,N))\ar[r]& \h_{-1}(\Hom(\widetilde{T},N)).}$$
It follows from Lemma \ref{lem:3.4} and Theorem \ref{theorem:3.6} that the following sequence
$$\xymatrix@C=0.93cm{0 \ar[r] & \h_0(\Hom(\Sigma^{-1}X,N))  \ar[r] & \Ext^{1}(M,N) \ar[r]^{\varepsilon_R^1(M,N)}
  &\wtE_R^{1}(M,N)}$$ is exact. Since $X$ is a proper $\GA$-resolution of $M$, ${\rm Ext}_{\GA}^{1}(M,N)\cong\h_{-1}(\Hom(X,N))$ by Definition \ref{df:2.0}. Note that
$\h_{-1}(\Hom(X,N))\cong \h_0(\Hom(\Sigma^{-1}X,N))$. So ${\rm Ext}_{\GA}^{1}(M,N)\cong\ker(\widetilde{\varepsilon}_R^1(M,N))$, as desired.

(2) Let $n$ be any integer with $n>1$. Note that $\Yxt_{\AAB}^n(M,N)\cong\h_{1-n}(\Hom(L_{M},N))$ by Theorem \ref{theorem:3.6}. By Lemma \ref{prop:5.2},
we have $$\h_{1-n}(\Hom(L_{M},N))=\h_{1-n}(\Hom(\Sigma^{-1}X,N)).$$ Then $\Yxt_{\AAB}^n(M,N)\cong\h_{1-n}(\Hom(\Sigma^{-1}X,N))$. It follows from Definition \ref{df:2.0} that ${\rm Ext}_{\GA}^{n}(M,N)\cong\h_{-n}(\Hom(X,N))$ since $X$ is a proper $\GA$-resolution of $M$. Note that $\h_{-n}(\Hom(X,N))\cong \h_{1-n}(\Hom(\Sigma^{-1}X,N))$. So $\oext^{n}_{\GA}(M,N)\cong\Yxt_{\AAB}^n(M,N)$. This completes the proof.
\end{proof}

\begin{lem}\label{lem:5.1''} Let  $0\ra M\ra B\ra A\ra 0$ be an exact sequence of $R$-modules with $B\in{\B}$ and $A\in{\A}$. For each $R$-module $N$ in $\B$, we have the following commutative diagram with exact rows
$$\xymatrix@C=1.5cm{
  \cdots\ar[r]&\Yxt_{\AAB}^i(B,N) \ar[d]^{f^i} \ar[r] & \Ext^i(B,N) \ar[d]^{g^i} \ar[r]^{\widetilde{\varepsilon}_R^i(B,N)} & \wte_{\AAB}^i(B,N) \ar[d]^{h^i}\ar[r] &\cdots\\
   \cdots\ar[r]&\Yxt_{\AAB}^i(M,N)  \ar[r] & \Ext^i(M,N)  \ar[r]^{\widetilde{\varepsilon}_R^i(M,N)} & \wte_{\AAB}^i(M,N)  \ar[r]&\cdots}$$
satisfying that
\begin{enumerate}
\item{\rm$g^i:\Ext^i(B,N)\ra \Ext^i(M,N)$} is an isomorphism for any $i\geqslant1$.

\item  {\rm$h^i:\wte_{\AAB}^i(B,N)\ra \wte_{\AAB}^i(M,N)$} is an isomorphism for each $i\in{\mathbb{Z}}$.
\end{enumerate}
\end{lem}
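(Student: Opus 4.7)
The plan is to produce the diagram via a $3\times 3$ grid of Hom-complexes and then verify the two isomorphism claims via separate vanishing statements. First, a horseshoe-type argument based on Fact \ref{fact:2.3} together with completeness of the induced cotorsion pairs from Lemma \ref{lem:3.3} produces fibrant-cofibrant resolutions of $B$, $A$, and $N$ that fit into a short exact sequence
$$0\to \mathcal{RQ}B\to \mathcal{RQ}A\to \mathcal{RQ}N\to 0$$
of complexes which splits in every degree. Applying each of $\dHom(\mathcal{RQ}M,-)$, $\Hom(\mathcal{RQ}M,-)$, and $\lHom(\mathcal{RQ}M,-)$ to this sequence preserves exactness, and combining the three results with the short exact sequences $0\to \dHom\to \Hom\to \lHom\to 0$ from Fact \ref{fact:2.5} attached to $\mathcal{RQ}B$, $\mathcal{RQ}A$, and $\mathcal{RQ}N$ produces a $3\times 3$ commutative grid of short exact sequences of complexes. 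Taking homology along the columns indexed by $A$ and $N$ recovers the two long exact sequences of Remark \ref{rem:4.1} that appear as the rows of the claimed diagram, and naturality of the grid makes the induced vertical maps $f^i$, $g^i$, $h^i$ fit into a commutative diagram.

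For part (1), take the long exact sequence in homology of the middle row $0\to \Hom(\mathcal{RQ}M,\mathcal{RQ}B)\to \Hom(\mathcal{RQ}M,\mathcal{RQ}A)\to \Hom(\mathcal{RQ}M,\mathcal{RQ}N)\to 0$; by Lemma \ref{lem:3.4} its $(-i)$-th homology computes the ordinary $\Ext^i$. Since $(\mathcal{A},\mathcal{B})$ is a complete hereditary cotorsion pair with $M\in\mathcal{A}$ and $B\in\mathcal{B}$, one has $\Ext^i(M,B)=0$ for every $i\geqslant 1$, forcing $g^i$ to be an isomorphism for all $i\geqslant 1$.

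For part (2), it suffices to show $\wte_{\AAB}^i(M,B)=0$ for every $i\in\mathbb{Z}$, since the long exact sequence in homology of the bottom row $0\to \lHom(\mathcal{RQ}M,\mathcal{RQ}B)\to \lHom(\mathcal{RQ}M,\mathcal{RQ}A)\to \lHom(\mathcal{RQ}M,\mathcal{RQ}N)\to 0$ is the long exact sequence in $\wte_{\AAB}(M,-)$ attached to $0\to B\to A\to N\to 0$, and the vanishing then forces $h^i$ to be an isomorphism. Observe that $B\in\mathcal{B}$ implies the stalk $B[0]$ lies in $dg\widetilde{\mathcal{B}}$, so $\mathcal{B}\textrm{-dim}\,B\leqslant 0$; Corollary \ref{cor:4.3} then yields $\mathcal{GB}\textrm{-dim}\,B=\mathcal{B}\textrm{-dim}\,B<\infty$. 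Moreover, for $M\in\mathcal{A}$, the stalk $M[0]$ is a $dg$-$\mathcal{A}$ complex, since $\Hom(M[0],Y)$ is exact for every $\widetilde{\mathcal{B}}$-complex $Y$ thanks to the vanishing $\Ext^1(M,\textrm{Z}_n(Y))=0$. The implication $(1)\Rightarrow(2)$ of Corollary \ref{cor:4.4}, applied to the complex $B$, then delivers $\wte_{\AAB}^i(M,B)=0$ for every $i\in\mathbb{Z}$, completing the argument.

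The principal technical obstacle is the horseshoe-type construction of a degreewise split short exact sequence of fibrant-cofibrant resolutions; once this is in hand, both isomorphism claims reduce, respectively, to standard $\Ext$-vanishing across the cotorsion pair and to the Tate-Vogel vanishing provided by Corollary \ref{cor:4.4}.
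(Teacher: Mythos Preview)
Your proposal is correct. The overall architecture---build a degreewise split short exact sequence of fibrant-cofibrant resolutions, form the $3\times 3$ grid of $\dHom/\Hom/\lHom$-complexes, and read off the two long exact sequences of Remark~\ref{rem:4.1}---coincides with the paper's proof, and your argument for part~(1) via $\Ext^{\geqslant 1}(M,B)=0$ is identical to the paper's.

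The genuine difference is in part~(2). The paper does not invoke Corollary~\ref{cor:4.4}; instead it makes a specific choice of $\mathcal{RQ}B$. Since $B\in\B$, one can take $\mathcal{Q}B$ to be a bounded-below complex of objects in $\A\cap\B$ (a resolution $\cdots\to X_1\to X_0\to 0$ with $X_i\in\A\cap\B$ and cokernels in $\B$), and then $\mathcal{Q}B\in dg\AT\cap dg\BT$ already, so $\mathcal{RQ}B=\mathcal{Q}B\in\mathbb{C}^-(R)$. Because $\mathcal{RQ}B$ is bounded below, every homomorphism into it is automatically bounded below, whence $\dHom(\mathcal{RQ}M,\mathcal{RQ}B)=\Hom(\mathcal{RQ}M,\mathcal{RQ}B)$ and therefore $\lHom(\mathcal{RQ}M,\mathcal{RQ}B)=0$ as a complex. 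This kills $\wte_{\AAB}^i(M,B)$ for all $i$ at the chain level, with no appeal to Corollary~\ref{cor:4.4}. Your route through Corollary~\ref{cor:4.4} is valid (there is no circularity, since that corollary precedes the present lemma and its proof of $(1)\Rightarrow(2)$ rests on \cite[Theorem 1.1(2)]{HuDing}), but it imports an external result where the paper's argument is self-contained and more elementary. Conversely, your approach has the advantage that it does not require tracking any boundedness through the horseshoe construction: any compatible system of fibrant-cofibrant resolutions will do.
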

\begin{proof}Since $A\in{\A}$, there is an exact sequence $0\ra A\ra X_{0}\ra X_{-1}\ra \cdots$ of $R$-modules such that $X_i\in{\AB}$ for $i\leqslant0$ and
$\ker(X_{i}\ra X_{i-1})\in{\A}$ for $i\leqslant0$. Let $\mathcal{R}A$ be the complex $ 0\ra X_{0}\ra X_{-1}\ra \cdots$. Then $A\ra \mathcal{R}A$ is a fibrant replacement with $\mathcal{R}B$ in $dg\AbT \cap dg\BBT$. Hence $\mathcal{QR}A=\mathcal{R}A$.
By \cite[Lemma 3.12]{HuDing}, we have the following commutative diagram with exact rows such that the columns are fibrant
replacements:
$$\xymatrix@C=1.5cm{
  0\ar[r]&  M \ar[d]\ar[r] &  B \ar[d]\ar[r] &  A \ar[d]\ar[r] &0\\
   0\ar[r]&\mathcal{R}M \ar[r] & \mathcal{R}B \ar[r] &\mathcal{R}A \ar[r]&0.}$$
Note that there exists an exact sequence $0\ra L\ra \mathcal{QR}B\ra \mathcal{R}B\ra 0$ of complexes with $L$ an $\B$ complex. Consider the following pullback diagram:
$$\xymatrix{&0\ar[d]&0\ar[d]&&\\
&L\ar[d]\ar@{=}[r]&L\ar[d]&&\\
0\ar[r]&W\ar[r]\ar[d]&\mathcal{QR}B\ar[r]\ar[d]&\mathcal{R}A
\ar[r]\ar@{=}[d]& 0\\
0\ar[r]&\mathcal{R}M\ar[r]\ar[d]&\mathcal{R}B\ar[d]\ar[r]&\mathcal{R}A
\ar[r]& 0\\
&0& \ 0.&&\\
}$$
Since $\mathcal{R}A$ and  $\mathcal{QR}B$ are in $dg\widetilde{\mathcal{A}}$, so is $W$ by Lemma \ref{lem:3.3}. Let $\mathcal{QR}M=W$. Then $\mathcal{QR}M\ra\mathcal{R}M$ is a cofibrant
replacement. Thus we have the following commutative diagram with exact rows such that the columns are cofibrant-fibrant
resolutions:
$$\xymatrix@C=1.5cm{
  0\ar[r]& M \ar[d] \ar[r] & B \ar[r]\ar[d] &  A \ar[d]\ar[r]&0\\
   0\ar[r]&\mathcal{R}M \ar[r] & \mathcal{R}B  \ar[r] & \mathcal{R}A \ar[r]&0\\
    0\ar[r]&\mathcal{QR}M\ar[r]\ar[u] & \mathcal{QR}B \ar[r] \ar[u]& \mathcal{QR}A \ar[r]\ar@{=}[u]&0.}$$

Let $N$ be an $R$-module in $\mathcal{B}$. Choose any cofibrant-fibrant resolution $N\rightarrow \mathcal{R}N \leftarrow \mathcal{QR}N$ of $N$. Then we have the following commutative diagram with exact rows and columns:
$$\xymatrix@C=15pt@R=15pt{
  &0 \ar[d]& 0 \ar[d] & 0 \ar[d] & \\
  0\ar[r]& \dHom(\mathcal{QR}A,\mathcal{QR}N) \ar[r] \ar[d] & \Hom(\mathcal{QR}A,\mathcal{QR}N)  \ar[r] \ar[d] & \lHom(\mathcal{QR}A,\mathcal{QR}N)  \ar[d] \ar[r] &0\\
  0\ar[r]& \dHom(\mathcal{QR}B,\mathcal{QR}N) \ar[r] \ar[d] & \Hom(\mathcal{QR}B,\mathcal{QR}N)  \ar[r] \ar[d] & \lHom(\mathcal{QR}B,\mathcal{QR}N)  \ar[d] \ar[r] &0\\
  0\ar[r]& \dHom(\mathcal{QR}M,\mathcal{QR}N) \ar[r] \ar[d] & \Hom(\mathcal{QR}M,\mathcal{QR}N)  \ar[r] \ar[d] & \lHom(\mathcal{QR}M,\mathcal{QR}N)  \ar[d] \ar[r] &0\\
  &0 & 0  &\ 0.  &}$$
Since $\mathcal{QR}A\in{\mathbb{C}^{+}(R)}$, $\dHom(\mathcal{QR}A,\mathcal{QR}N)=\Hom(\mathcal{QR}A,\mathcal{QR}N)$. For each integer $n\in{\mathbb{Z}}$, we have $\h_n(\lHom(\mathcal{QR}B,\mathcal{QR}N))\cong\h_n(\lHom(\mathcal{QR}M,\mathcal{QR}N))$. It follows from Lemma \ref{lem:3.4} that  $\Ext^{i}(A,N)\cong\h_{-i}(\Hom(\mathcal{QR}A,\mathcal{QR}N))$ for each integer $i\in{\mathbb{Z}}$. Since $A\in{\A}$ and $N\in{\B}$, $\h_i(\Hom(\mathcal{QR}A,\mathcal{QR}N))=0$ for all $i\leqslant-1$. Thus $\h_i(\Hom(\mathcal{QR}B,\mathcal{QR}N))\cong\h_i(\Hom(\mathcal{QR}M,\mathcal{QR}N))$ for all $i\leqslant-1$. Applying the long exact sequence theorem to the commutative diagram above, we have
the desired commutative diagram in Lemma \ref{lem:5.1''}.
\end{proof}

\begin{fact}\label{fact:5.4}{\rm Let $M$ be an $R$-module in $\B$ with finite Gorenstein $\A$ dimension. Then $M$ has a proper $\GA$-resolution $X\ra M$ by Lemma \ref{prop:5.2}. Note that the class $\mathcal{A}$ is pecovering by hypothesis. Choose a proper $\A$-resolution  $F\ra M$ and a morphism $\gamma:F\ra X$ lifting the identity on $M$. For each $R$-module $N$ in $\B$, we have $\Hom(F,N)\cong \R\Hom(M,N)$ by Lemma \ref{lem:3.4} and the morphism of complexes
$$\Hom(\gamma,N):\Hom(X,N)\ra \Hom(F,N) $$
induces a natural homomorphism of abelian groups $$\varepsilon^{n}(M,N):\textrm{Ext}^{n}_{\GA}(M,N)\ra \textrm{Ext}^{n}_{R}(M,N)$$
for every $n\in \mathbb{Z}$. The groups and the maps defined above do not depend on the choices of resolutions and liftings by
\cite[Proposition 2.2]{Hgd}.}
\end{fact}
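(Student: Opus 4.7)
The plan is to justify each ingredient of Fact \ref{fact:5.4} in turn: existence of the lifting $\gamma$, the identification $\R\Hom(M,N)\simeq\Hom(M,L)$, and well-definedness. First I would produce the lifting $\gamma:X\ra L$. Since $X$ is a proper $\GB$-coresolution of $N$, the augmented complex $X^{+}:0\ra N\ra X_{0}\ra X_{-1}\ra\cdots$ is $\Hom(-,\GB)$ exact by Definition \ref{df:2.2}. Because each component $L_{-i}$ lies in $\B\subseteq \GB$, the standard comparison-theorem induction applies: lift the augmentation $N\ra L_{0}$ to $\gamma_{0}:X_{0}\ra L_{0}$ using surjectivity of $\Hom(X_{0},L_{0})\ra\Hom(N,L_{0})$; observe that $\partial_{0}^{L}\gamma_{0}$ annihilates the image of $N$ in $X_{0}$ and hence factors through the cokernel; then lift again using $\Hom(-,L_{-1})$-exactness of the appropriate truncation of $X^{+}$. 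Iterating yields the required chain map $\gamma$ extending $\mathrm{id}_{N}$.

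Next I would identify $\Hom(M,L)$ with $\R\Hom(M,N)$. The stalk complex $M\in\A$ sits in $dg\AT$, because for every $\B$ complex $B$ the exactness of $\Hom(M,B)$ reduces to $\Ext^{1}(M,\mathrm{Z}_{n}(B))=0$, which holds as $M\in\A$ and $\mathrm{Z}_{n}(B)\in\B$. Building $L$ by iterated special $\B$-preenvelopes places every cokernel $\mathrm{C}_{-i}(L^{+})$ in ${}^{\perp_{1}}\B=\A$, and this, together with the hereditary hypothesis on $(\A,\B)$, suffices to place $L$ in $dg\BT$. Combining the quasi-isomorphism $N\simeq L$ in $\mathbb{D}(R)$ with Lemma \ref{lem:3.4} then yields $\R\Hom(M,N)\simeq\Hom(M,L)$, whence $\Ext^{n}(M,N)\cong\h_{-n}(\Hom(M,L))$ for every $n\in\mathbb{Z}$.

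With both steps in place, $\Hom(M,\gamma)$ induces the comparison $\varepsilon^{n}(M,N):=\h_{-n}(\Hom(M,\gamma))$, which under Definition \ref{df:2.0} and the identification above is the desired natural homomorphism $\Ext^{n}_{\GB}(M,N)\ra \Ext^{n}(M,N)$. For independence of the choices of proper $\GB$- and $\B$-coresolutions and of the lifting $\gamma$, I would invoke the standard comparison fact: any two liftings of $\mathrm{id}_{N}$ through proper coresolutions are chain-homotopic, the homotopy being constructed inductively from the same $\Hom(-,\GB)$-exactness of $X^{+}$, as recorded in \cite[Exercise 3, p.169]{EJ}. The main obstacle I anticipate is verifying $L\in dg\BT$, since a proper $\B$-coresolution is not by definition a $dg$-$\B$ complex; the extra input comes from the explicit construction via iterated special $\B$-preenvelopes, which forces every cokernel of $L^{+}$ into $\A$ and hence makes $\Hom(A,L)$ exact for every $\A$ complex $A$.
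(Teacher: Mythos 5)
Your construction follows the paper's (essentially citation-level) justification step for step: the lifting $\gamma$ exists by the comparison argument, since $X^{+}$ is $\Hom(-,\GB)$ exact and each $L_{-i}\in\B\subseteq\GB$; the identification $\Hom(M,L)\cong\R\Hom(M,N)$ comes from Lemma \ref{lem:3.4} once one knows that the stalk complex $M$ lies in $dg\AT$ (your argument for this is fine) and that $L$ lies in $dg\BT$; and independence of the choices is the usual homotopy-uniqueness of liftings over proper coresolutions, as cited from \cite{EJ}.

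The one place where what you wrote is not actually a proof is the step you yourself flag as the main obstacle, namely $L\in dg\BT$. The reason you offer --- that building $L$ from iterated special $\B$-preenvelopes puts every cokernel of $L^{+}$ in $\A$, ``and hence'' $\Hom(A,L)$ is exact for every $\A$ complex $A$ --- does not establish the claim: the cokernels of $L^{+}$ are not the relevant input, and no argument is supplied for the implication. What actually makes it work is only that $L$ has entries in $\B$ and satisfies $L_{i}=0$ for $i>0$: given an $\A$ complex $A$ and a chain map $f:A\ra L$ of any fixed degree, $f$ vanishes in all sufficiently high degrees, and a null-homotopy is constructed by downward induction, the obstruction at each stage lying in $\Ext^{1}(\textrm{Z}_{i}(A),L_{j})=0$ because the cycles (equivalently the cokernels) of $A$ lie in $\A$ while $L_{j}\in\B$. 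This is exactly \cite[Lemma 3.4]{GT}, which the paper already uses elsewhere (e.g.\ in the proof of Lemma \ref{prop:3.5}); in particular it applies to an arbitrary proper $\B$-coresolution $L$, so the restriction to the special-preenvelope construction is unnecessary (though harmless, given the independence statement). With that step justified correctly, the rest of your proposal is right and coincides with the paper's intended argument.
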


\begin{lem}\label{lem:5.1'''} Let $M$ be an $R$-module with finite Gorenstein $\A$ dimension and $N$ an $R$-module in $\B$. For each integer $i$ with $i\geqslant1$, we have the following commutative diagram such that the downward arrows are isomorphisms
$$\xymatrix@C=1.5cm{
  {\rm Ext}^{i}_{\GA}(B,N)\ar[d]^{\cong} \ar[r]^{\varepsilon^{i}(B,N)} & \Ext^i(B,N) \ar[d]^{\cong} \\
  {\rm Ext}^{i}_{\GA}(M,N) \ar[r]^{\varepsilon^{i}(M,N)} & \Ext^i(M,N),}$$
where $M\ra B$ is a special $\B$ preenvelope of $M$.
\end{lem}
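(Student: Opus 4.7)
\emph{Proof plan.} The special $\A$-precover $A\to N$ gives a short exact sequence $0\to B\to A\to N\to 0$ with $B\in\B$ and $A\in\A$. By Lemma \ref{lem:5.1}, $\mathcal{GB}\textrm{-dim}A=\mathcal{GB}\textrm{-dim}N<\infty$. Moreover, an iterated application of special $\A$-precovers to modules in $\B$ (which stay in $\AB$ because $\B$ is closed under extensions in a hereditary cotorsion pair) shows that $\B\subseteq\GB$, so in particular $B\in\GB$. The right-hand vertical map in the diagram is the isomorphism $g^{i}$ of Lemma \ref{lem:5.1''}(1), so the only substantive tasks are to produce the left-hand vertical isomorphism and to verify that the square commutes.

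By Lemma \ref{prop:5.2}, $A$ admits a proper $\GB$-coresolution $X^{+}\colon 0\to A\to X_{0}\to X_{-1}\to\cdots\to X_{-n}\to 0$ with $X_{0}\in\GB$ and $X_{-i}\in\AB$ for $i\geqslant 1$. Set $Y_{0}:=X_{0}/B$ (using $B\hookrightarrow A\hookrightarrow X_{0}$) and $Y_{-i}:=X_{-i}$ for $i\geqslant 1$; this yields $Y^{+}\colon 0\to N\to Y_{0}\to X_{-1}\to\cdots\to X_{-n}\to 0$ fitting into a short exact sequence of augmented complexes $0\to K^{+}\to X^{+}\to Y^{+}\to 0$, where $K^{+}$ has $B$ in degrees $1$ and $0$ joined by the identity. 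The snake lemma gives exactness of $Y^{+}$, and Proposition \ref{prop:2.3}(1) yields $Y_{0}\in\GB$. I expect the main obstacle to be showing that $Y^{+}$ is $\Hom(-,\GB)$-exact; the key input is the canonical identification $Y_{0}/N=X_{0}/A$, which, together with properness of $X^{+}$, allows one to lift $\psi\colon N\to G$ (for any $G\in\GB$) to a map $A\to G$ vanishing on $B$, extend to $\chi\colon X_{0}\to G$ which still vanishes on $B$ and so descends to $Y_{0}\to G$, and to extend maps $X_{0}/A\to G$ along $X_{0}/A\hookrightarrow X_{-1}$. At degrees $\leqslant -1$, $X^{+}$ and $Y^{+}$ agree, so $\Hom(-,G)$-exactness is inherited.

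With $Y^{+}$ confirmed as a proper $\GB$-coresolution of $N$, apply $\Hom(M,-)$ to the short exact sequence $0\to K\to X\to Y\to 0$ of unaugmented complexes. Since $M\in\A$ and $B\in\B$ force $\Ext^{1}(M,B)=0$, the sequence is termwise exact, and because $\Hom(M,K)$ is concentrated in degree $0$, the associated long exact sequence in homology collapses to $\Ext_{\GB}^{i}(M,A)\cong\Ext_{\GB}^{i}(M,N)$ for every $i\geqslant 1$. Commutativity of the square follows from the naturality of $\varepsilon^{i}$ in the second argument: the morphism $A\to N$ is covered by the comparison map of proper $\GB$-coresolutions $X\to Y$ built above, and by choosing compatible proper $\B$-coresolutions of $A$ and $N$ (possible since $\B$ is preenveloping), the comparison maps defining $\varepsilon^{i}(M,A)$ and $\varepsilon^{i}(M,N)$ in the sense of Fact \ref{fact:5.4} are intertwined by the vertical isomorphisms.
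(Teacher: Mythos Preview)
Your proposal is correct and follows essentially the same route as the paper: build the proper $\GB$-coresolution $Y$ of $N$ as the pushout/quotient of the one for $A$ (the paper does this via an explicit pushout diagram, you via $Y_0=X_0/B$, which is the same thing), verify properness using $Y_0/N=X_0/A$, and read off the isomorphism on relative Ext from the degreewise-split short exact sequence $0\to K\to X\to Y\to 0$ with $K$ the stalk $B$ in degree $0$.

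The only place you are lighter than the paper is the commutativity of the square. You appeal to naturality of $\varepsilon^{i}(M,-)$ in the second variable; the paper instead repeats the pushout construction for proper $\B$-coresolutions (producing $X'$ for $A$ and $Y'$ for $N$ with a compatible short exact sequence $0\to K\to X'\to Y'\to 0$), then uses the universal property of the pushout to build $\psi:Y\to Y'$ compatibly with $\varphi:X\to X'$, so that the two squares of complexes commute on the nose before applying $\Hom(M,-)$. Your naturality argument is valid (and is what the paper's explicit construction is verifying), but since Fact~\ref{fact:5.4} as stated only asserts independence of choices rather than functoriality in $N$, you should either cite the standard references there for naturality or sketch the pushout construction of compatible $\B$-coresolutions as the paper does. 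The aside that $\B\subseteq\GB$ is correct but not needed anywhere in the argument.
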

\begin{proof}Let $f:M\ra B$ be a special $\B$ preenvelope of $M$. Then we have an exact sequence $0\ra M\ra B\ra A\ra 0$ of $R$-modules with $B\in{\B}$ and $A\in{\A}$. Since $\mathcal{GA}\textrm{-dim}M<\infty$, there is a nonnegative integer $n$ such that {\rm $\mathcal{GA}\textrm{-dim}M=\mathcal{GA}\textrm{-dim}B\leqslant n$}. By the proof of Lemma \ref{prop:5.2}, $B$ has a proper $\GA$-resolution $\beta:X\ra B$ such that $X_{0}\in{\GA}$, $X_{i}\in{\AB}$ for $1\leqslant i\leqslant n$ and $X_{i}=0$ for $i> n$. Consider the following pullback diagram:
$$\xymatrix{&0\ar[d]&0\ar[d]&&\\
&\textrm{C}_{1}(X)\ar[d]\ar@{=}[r]&\textrm{C}_{1}(X)\ar[d]&&\\
0\ar[r]&L\ar[r]^{\lambda}\ar[d]^{\alpha}&X_{0}\ar[r]\ar[d]^{\beta_{0}}&A
\ar[r]\ar@{=}[d]& 0\\
0\ar[r]&M\ar[r]^{f}\ar[d]&B\ar[d]\ar[r]&\A
\ar[r]& 0\\
&0& \ 0.&&\\
}$$
Note that $A\in{\A}$ and $X_{0}\in{\GA}$. Then $L\in{\GA}$ by Lemma \ref{prop:2.3}(1). Assume that $Y$ is a complex such that $Y_{0}=L$, $Y_{i}=X_{i}$ for  $1\leqslant i\leqslant n$ and $Y_{i}=0$ for all $i> n$. Thus $\eta:Y\ra M$ is a proper $\GA$-resolution of $M$, where $\eta_{0}=\alpha$ and $\eta_{i}=0$ for $i\neq0$. Let $K=\cdots\rightarrow 0\rightarrow A\rightarrow 0\rightarrow\cdots$ with $A$ in the 0th position and 0 in the other positions. It is easy to see that $0\ra Y\s{\gamma}\ra X\ra K\ra 0$ is an exact sequence of complexes, where $\gamma_{0}=\lambda$ and $\gamma_{i}={\rm id}_{X_{i}}$ for all $i\geqslant1$.

Note that $B\in{\B}$. Then $B$ has a proper $\A$-resolution $\beta':X'\ra B$ such that $X'_{i}\in{\AB}$ and ${\rm C}_{i}(X')\in{\B}$ for $i\geqslant0$. By the foregoing proof, $M$ has a proper $\A$-resolution $\eta':Y'\ra M$ with $Y'_{i}=X'_{i}$ for all $i\geqslant1$ such that the following diagram  $$\xymatrix{
0\ar[r]&Y'_{0}\ar[r]^{\mu}\ar[d]^{\eta'_{0}}&X'_{0}\ar[r]\ar[d]^{\beta'_{0}}&A
\ar[r]\ar@{=}[d]& 0\\
0\ar[r]&M\ar[r]^{f}&B\ar[r]&A
\ar[r]& 0\\
}$$ is commutative. Thus $0\ra Y'\s{\gamma'}\ra X'\ra K\ra0$ is an exact sequence of complexes, where $\gamma'_{0}=\mu$ and $\gamma'_{i}={\rm id}_{Y'_{i}}$ for all $i\geqslant1$. Since each $R$-module in $\A$ belongs to $\GA$, there exists $\varphi:X'\ra X$ such that $\beta\varphi=\beta'$. Note that $f\eta'_{0}=\beta'_{0}\mu=\beta_{0}\varphi_{0}\mu$. Using the pullback of homomorphisms $f$ and $\beta_{0}$, we have a morphism $\rho:Y'_{0}\ra Y_{0}$ such that $\lambda\rho=\varphi_{0}\mu$. Let $\psi:Y'\ra Y$ be a morphism such that $\psi_{0}=\rho$ and $\psi_{i}=\varphi_{i}$ for all $i\geqslant1$. Thus we have the following diagram of complexes with exact rows:
$$\xymatrix{
0\ar[r]&Y'\ar[r]^{\gamma'}\ar[d]^{\psi}&X'\ar[r]\ar[d]^{\varphi}&Y
\ar[r]\ar@{=}[d]& 0\\
0\ar[r]&Y\ar[r]^{\gamma}&X\ar[r] &K
\ar[r]& 0.\\
}$$
Let $N$ be an $R$-module in $\B$. Thus we have the following diagram of complexes:
$$\xymatrix{
0\ar[r]&\Hom(K,N)\ar[r]\ar@{=}[d]&\Hom(X,N)\ar[r]\ar[d]&\Hom(Y,N)
\ar[r]\ar[d]& 0\\
0\ar[r]&\Hom(K,N)\ar[r]&\Hom(X',N)\ar[r]&\Hom(Y',N)
\ar[r]& 0.\\
}$$
Note that $\textrm{H}_{i}(\Hom(K,N))=0$ for all $i\leqslant-1$. Applying the long exact sequence theorem to the commutative diagram above, we have
the desired commutative diagram in Lemma \ref{lem:5.1'''}.
\end{proof}

We now finish this section by giving the proof of Theorem \ref{thm:1.4'} as follows.
\begin{para}\label{4.10}{\bf Proof of Theorem \ref{thm:1.4'}.} Since $M$ is an $R$-module with  $\mathcal{GA}\textrm{-dim}M<\infty$, there exists an exact sequence $0\ra M\ra B\ra A\ra 0$ of $R$-modules with $B\in{\B}$ and $A\in{\A}$. Thus {\rm $\mathcal{GA}\textrm{-dim}M=\mathcal{GA}\textrm{-dim}B<\infty$} by Lemma \ref{lem:5.1}. Let $N$ be an $R$-module in $\B$. By Proposition \ref{thm:1.4}, Lemmas \ref{lem:5.1''} and \ref{lem:5.1'''}, we have $\oext^{1}_{\GA}(M,N)\cong\oext^{1}_{\GA}(B,N)\cong\ker(\widetilde{\varepsilon}_R^1(B,N))\cong \ker(\widetilde{\varepsilon}_R^1(M,N))$ and $\oext^{n}_{\GA}(M,N)\cong\oext^{n}_{\GA}(B,N)\cong\Yxt_{\AAB}^{n}(B,N)\cong\Yxt_{\AAB}^{n}(M,N)$ for any integer $n>1$. This completes the proof.
\hfill$\Box$
\end{para}


\section{\bf Comparisons between  Gorenstein $\A$ dimensions and $\A$ dimensions}\label{comparsion}
Our goal of this section is to investigate the relationships between  Gorenstein $\A$ dimensions and $\A$ dimensions for complexes. To this end, we start with the following exact sequence for modules with finite Gorenstein $\A$ dimension, connecting relative and Tate-Vogel cohomologies via a long exact sequence. We will refer to a sequence of this form as an Avramov-Martsinkovsky exact sequence. The sequence is similar to \cite[Theorem 7.1]{AM}.

\begin{prop}\label{thm:C'} Assume that $M$ is an $R$-module such that $\GAT$-dim$M$ $\leqslant g$ with $g\geqslant1$ an integer. For each $R$-module $N$ in $\B$, there
is a long exact sequence

\vspace{2mm}
\noindent\hspace{12mm}{\rm $\xymatrix@C=35pt@R=35pt{0\ar[r]& \oext^{1}_{\GA}(M,N)\ar[r]^{\varepsilon^{1}(M,N)}&\textrm{Ext}_{R}^{1}(M,N)
\ar[r]&\widetilde{\textrm{Ext}}^{1}_{\AAB}(M,N)\ar[r]&}$

\noindent \hspace{9mm}$\xymatrix@C=35pt@R=35pt{\cdots\ar[r]&\oext^{g}_{\GA}(M,N)\ar[r]^{\varepsilon^{g}(M,N)}&
\textrm{Ext}_{R}^{g}(M,N)\ar[r]&\widetilde{\textrm{Ext}}^{g}_{\AAB}(M,N)\ar[r]&0.}$}
\end{prop}
\begin{proof}
Let $M\ra B$ be a special $\B$ preenvelope of $M$. By Proposition \ref{thm:3.9} and Lemma \ref{lem:5.1}, there exists a split complete $\A$ resolution $T\s{\tau} \ra \mathcal{Q}B\ra B\s{{\rm id}}\leftarrow B$ of $B$ with each $\tau_{i}$ a split epimorphism such that $\tau_{i}$ $=$ {\rm id$_{T_{i}}$} for all $i\geqslant g$.
Thus $\ker(\tau)$ is a complex such that $(\ker(\tau))_i=0$ for all $i\geqslant g$, and hence $\h_{-g}(\Hom(\ker(\tau),N))=0$. Note that $\Yxt_{\AAB}^{g+1}(B,N)\cong\h_{-g}(\Hom(\ker(\tau),N))$ by Theorem \ref{theorem:3.6}. It follows that $\Yxt_{\AAB}^{g+1}(B,N)=0$.
By Remark \ref{rem:4.1}, we have the following exact sequence:
$$\mathbb{X}:\xymatrix@C=0.93cm{
  \cdots \ar[r]& \Yxt_{\AAB}^i(B,N) \ar[r] & \Ext^i(B,N) \ar[r] & \wte_{\AAB}^i(B,N)\ar[r] &\cdots.}$$
 Applying Theorem \ref{thm:1.4'} and Fact \ref{fact:5.4} to the exact sequence $\mathbb{X}$ above, we have the the following exact sequence:

\vspace{2mm}
\noindent\hspace{12mm}{\rm $\xymatrix@C=35pt@R=35pt{0\ar[r]& \oext^{1}_{\GA}(B,N)\ar[r]^{\varepsilon^{1}(B,N)}&\textrm{Ext}_{R}^{1}(B,N)
\ar[r]&\widetilde{\textrm{Ext}}^{1}_{\AAB}(B,N)\ar[r]&}$

\noindent \hspace{9mm}$\xymatrix@C=35pt@R=35pt{\cdots\ar[r]&\oext^{g}_{\GA}(B,N)\ar[r]^{\varepsilon^{g}(B,N)}&
\textrm{Ext}_{R}^{g}(B,N)\ar[r]&\widetilde{\textrm{Ext}}^{g}_{\AAB}(B,N)\ar[r]&0.}$}

\noindent By Lemmas \ref{lem:5.1''} and \ref{lem:5.1'''}, we have the desired commutative diagram in Proposition \ref{thm:C'}.
\end{proof}

\begin{lem}\label{cor:4.4}Let $M$ be a complex with {\rm $\GAT$-dim$M$ $<\infty$}. Then the following are equivalent:
\begin{enumerate}
\item {\rm $\mathcal{A}\textrm{-dim}M$ = $\GAT$-dim$M$};
\item {\rm $\widetilde{\textrm{Ext}}_{\AAB}^i(M,X)=0$} for all $i\in{\mathbb{Z}}$ and all $dg$-$\B$ complexes $X$;
\item {\rm $\widetilde{\textrm{Ext}}_{\AAB}^i(M,N)=0$} for some $i\in{\mathbb{Z}}$ and all $R$-modules $N\in{\B}$.
\end{enumerate}
\end{lem}
\begin{proof}$(1)\Ra(2)$ holds by \cite[Theorem 1.1(1)]{HuDing}.

$(2)\Ra (3)$ is trivial.

$(3)\Ra (1)$. Note that $M$ is a complex of finite Gorenstein $\A$ dimension by hypothesis. Then there exists a
complete $\A$ resolution $T\s{\tau}\ra\mathcal{QR}M\ra \mathcal{R}M\leftarrow M$ of $M$ by Proposition \ref{thm:3.9}. Let $l:{\rm C}_{i}(T)\ra T_{i-1}$ be the canonical injection and $N$ an $R$-module in $\B$. Then $\h_{-i}(\Hom(T,N))=0$ by (3) and Theorem \ref{theorem:3.6}, and so we have the following exact sequence
$$\xymatrix@C=35pt@R=35pt{ \Hom(T_{i-1},N)\ar[rr]^{\textrm{Hom}_{R}\textrm{(}\partial^{T}_{i},N\textrm{)}}
&&\Hom(T_{i},N)\ar[rr]^{\textrm{Hom}_{R}\textrm{(}\partial^{T}_{i+1},N\textrm{)}}&&\Hom(T_{i+1},N).
    }$$
It is easy to check that $\Hom(l,N):\Hom(T_{i-1},N)\ra \Hom({\rm C}_{i}(T),N)$ is epic. Note that $0\ra {\rm C}_{i}(T)\s{l}\ra T_{i-1}\ra {\rm C}_{i-1}(T)\ra 0$ is an exact sequence of $R$-modules such that  ${\rm C}_{i}(T)$ is a Gorenstein $\A$-module. Then we have an exact sequence
$$\Hom(T_{i-1},N)\ra \Hom({\rm C}_{i}(T),N)\ra \Ext^1({\rm C}_{i-1}(T),N)\ra \Ext^1(T_{i-1},N)=0.$$
Thus we have $\Ext^1({\rm C}_{i-1}(T),N)=0$ for any $N\in{\B}$, and hence ${\rm C}_{i-1}(T)\in{\A}$. Consequently, $\textrm{C}_{j}(T)\in{\A}$ for all $j\geqslant i $. Let $j$ be an integer such that $j\geqslant$ max\{$i$, $\GAT$-dim$M$\}. Then $\textrm{C}_{j}(\mathcal{QR}M)\in{\A}$.
Thus $\mathcal{A}\textrm{-dim}M<\infty$ by \cite[Theorem 3.3]{YD}, and so $\GAT\textrm{-dim}M=\mathcal{A}\textrm{-dim}M<\infty$ by Corollary \ref{cor:4.3}. This completes the proof.
\end{proof}

\begin{lem}\label{cor:1.7} Let $M$ be an $R$-module with {\rm $\GAT$-dim$M$ $<\infty$}. Then the following are equivalent:
\begin{enumerate}
\item {\rm $\mathcal{A}\textrm{-dim}M$ = $\GAT$-dim$M$};
\item {\rm $\varepsilon^{n}(B,N):\textrm{Ext}^{n}_{\GA}(B,N)\ra \textrm{Ext}^{n}_{R}(B,N)$} is an isomorphism for all $n\in\mathbb{Z}$ and any $R$-module $N$ in $\B$.
\end{enumerate}
\end{lem}
\begin{proof}
$(1)\Ra(2)$. By hypothesis, there is a nonnegative integer $g$ such that {\rm $\mathcal{A}\textrm{-dim}M$ = $\GAT$-dim$M$ $\leqslant g$} with $g$ an integer.
It follows from Lemma \ref{lem:5.1} that {\rm $\mathcal{A}\textrm{-dim}B$ = $\GAT$-dim$B$ $\leqslant g$}. Let $N$ be an $R$-module in $\B$. One easily checks that $\textrm{Ext}^{0}_{\GA}(M,N)\cong \Hom(M,N)\cong\textrm{Ext}^{0}_{R}(M,N)$ and $\textrm{Ext}^{n}_{\GA}(M,N)=0=\textrm{Ext}^{n}_{R}(M,N)$ for $n<0$ or $n>g$.
For $1\leqslant n\leqslant g$, $\varepsilon^{n}(M,N):\textrm{Ext}^{n}_{\GA}(M,N)\ra \textrm{Ext}^{n}_{R}(M,N)$ is an isomorphism by Proposition \ref{thm:C'} and Lemma \ref{cor:4.4}. So (2) follows.

$(2)\Ra(1)$ holds by  and Proposition \ref{thm:C'} and Lemma \ref{cor:4.4}.
\end{proof}

We end this paper with the proof of Theorem \ref{thm:4.4} as follows.
\begin{para}\label{proofof1.4}{\bf Proof of Theorem \ref{thm:4.4}.}
(1). The result holds by Corollary \ref{cor:4.3}.

(2). $(a)\Leftrightarrow(b)\Leftrightarrow(c)$ hold by Lemma \ref{cor:4.4}.

$(a)\Ra(d)$. Let $\mathcal{QR}M\ra \mathcal{R}M\leftarrow M$ be a cofibrant-fibrant resolution of $M$. Then there is an integer $n$ such that $\textrm{C}_{n}(QRM)\in{\AB}$ by \cite[Theorem 3.3]{YD}. So (d) follows from Lemma \ref{cor:1.7}.

$(d)\Ra(a)$. Let $\mathcal{QR}M\ra \mathcal{R}M\leftarrow M$ be a cofibrant-fibrant resolution of $M$. Then there is an integer $s$ such that $\textrm{C}_{i}(QRM)\in{\B\cap\GA}$ for all $i>s$ by hypothesis and Proposition \ref{thm:3.9}. Thus there is an integer $n$ with $n>s$ such that
$$\varepsilon^{i}(\textrm{C}_{n}(\mathcal{QR}M),N):\textrm{Ext}^{i}_{\GA}(\textrm{C}_{n}(\mathcal{QR}M),N)\ra \textrm{Ext}^{i}_{R}(\textrm{C}_{n}(\mathcal{QR}M),N)$$ is an isomorphism for all $i\in{\mathbb{Z}}$ and any $R$-module $N$ in $\B$. It follows from  Lemma \ref{cor:1.7} that
{\rm$\mathcal{A}\textrm{-dim}\textrm{C}_{n}(\mathcal{QR}M)<\infty$}. Thus $\mathcal{A}\textrm{-dim}M<\infty$ by \cite[Theorem 3.3]{YD}, and so (a) holds by Lemma \ref{cor:4.4}. This completes the proof.
\hfill$\Box$
\end{para}

\bigskip \centerline {\bf ACKNOWLEDGEMENTS}
\bigskip
This research was partially supported by NSFC (11671069, 11771202, 11771212), Qing Lan Project of Jiangsu Province and Jiangsu Government Scholarship for Overseas Studies (JS-2019-328). 
\bigskip

\end{document}